\documentclass[reqno]{amsart}

\usepackage{ upgreek }
\usepackage{amssymb}
\usepackage{amsmath}
\usepackage{amsthm}
\usepackage{epic}
\usepackage{accents}
\usepackage{enumitem} 
\usepackage[mathscr]{euscript}
\usepackage{amsfonts}
\usepackage[utf8]{inputenc}
\usepackage{fancyhdr}
\usepackage{mathtools}
\usepackage{comment}

\newtheorem{theorem}{Theorem}[section]
\newtheorem{lemma}[theorem]{Lemma}
\newtheorem*{theorem*}{Theorem}
\newtheorem*{obs*}{Observation}
\newtheorem{proposition}[theorem]{Proposition}
\newtheorem{conj}[theorem]{Conjecture}

\newtheorem{corollary}[theorem]{Corollary}
\theoremstyle{definition}
\newtheorem {remark}[theorem]{Remark}
\newtheorem {fact}[theorem]{Fact}
\newtheorem {definition}[theorem]{Definition}

\usepackage{xcolor}

\def \C{\mathbb{C}}
\def \Z{\mathbb{Z}}
\def \Q{\mathbb{Q}}
\def \N{\mathbb{N}}
\def \R{\mathbb{R}}
\def \H{\mathcal{H}}

\def \ge{\geqslant}
\def \le{\leqslant}

\newcommand{\Ranexp}{\mathbb{R}_{\textnormal{an},\exp}}

\newcommand{\Rsubexp}{\mathcal{R}_{\textnormal{subexp}}}
\newcommand{\hg}{\hat{g}}


\copyrightinfo{}{}
\issueinfo{}
  {}
  {}
  {}

\PII{}
\def\@serieslogo{\@empty}

\title{Integer-valued o-minimal functions}

\author[Bhardwaj]{Neer Bhardwaj}
\address{Weizmann Institute of Science, Israel}
\email{nbhardwaj@msri.org}

\author[McCulloch]{Raymond McCulloch$^1$}
\address{University of Manchester, UK}
\email{raymond.mcculloch@manchester.ac.uk}
\thanks{$^1$RM thanks the Heilbronn Institute of Mathematical Research for their support.}

\author[Ramachandran]{Nandagopal Ramachandran}
\address{University of California San Diego, USA}
\email{naramach@ucsd.edu}

\author[Woo]{Katharine Woo$^2$}
\address{Princeton University, USA}
\email{khwoo@princeton.edu}
\thanks{$^2$KW is supported by the National Science Foundation under Award No. DGE-2039656.}
%

\begin{document}

\begin{abstract}
We study $\Ranexp$-definable functions $f:\R\to \R$ that take integer values at all sufficiently large positive integers. If  $|f(x)|= O\big(2^{(1+10^{-5})x}\big)$, then we find polynomials $P_1, P_2$ such that $f(x)=P_1(x)+P_2(x)2^x$ for all sufficiently large $x$. Our result parallels classical theorems of P\'olya and Selberg for entire functions and generalizes Wilkie's classification for the case of $|f(x)|= O(C^x)$, for some  $C<2$. 


Let $k\in \N$ and $\gamma_k=\sum_{j=1}^{k} 1/j$. Extending Wilkie's theorem in a separate direction, we show that if $f$ is {\em $k$-concordant} and $|f(x)|= O(C^{x})$, for some $C<e^{\gamma_k}+1$, then $f$ must eventually be given by a polynomial. This is an analog of a result by Pila for entire functions.
\end{abstract}

\maketitle

\section{Introduction}

\noindent
In \cite{Wilkie}, Wilkie studies functions $f:\R\to \R$  definable in the o-minimal structure $\Ranexp$ with the property that $f(a)\in \Z$ for all sufficiently large positive integers $a$. If there is a constant $C<2$ such that $|f(x)|\le C^x$ for all sufficiently large  $x$, then \cite[Theorem 5.1]{Wilkie} shows that $f$ is eventually given by a polynomial. This parallels a special case of the following seminal theorem by P\'olya for entire functions \cite{Polya}. 

\begin{theorem}\label{P}
Let $g:\C\to \C$ be an entire function which satisfies $g(a)\in \Z$ for all positive integers $a$. Suppose there is $M\in \N$ such that 
$$|g(z)|= O(|z|^M 2^{|z|}) \qquad \text{as}\ \ |z|\to \infty.$$
Then there are polynomials $P_1, P_2$ such that $g(z)=P_1(z)+P_2(z)2^z$ for all $z\in \C$.
\end{theorem}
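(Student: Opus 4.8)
The plan is to translate the statement about $g$ into a rigidity statement for a power series with integer coefficients, which a theorem of Fatou then settles. After replacing $g$ by $z\mapsto g(z+1)$ we may assume $g(a)\in\Z$ for all $a\in\Z_{\ge 0}$. I would begin with the Borel transform: the hypothesis says precisely that $g$ has exponential type at most $\log 2$, so $\gamma(t)=\sum_{n\ge0}g^{(n)}(0)t^{-n-1}$ is holomorphic off the conjugate indicator diagram $K\subseteq\overline{D}(0,\log2)$, with $g(z)=\frac{1}{2\pi i}\oint_{|t|=R}e^{zt}\gamma(t)\,dt$ for $R>\log2$. Since $\log2<\pi$, $\exp$ is injective on a neighbourhood of $\overline{D}(0,\log2)$, so $L:=\exp(-K)$ is a compact set with connected complement, contained in the ``lens'' $\Lambda=\{x:(\log|x|)^2+(\arg x)^2\le(\log2)^2\}$; a short computation shows that $\Lambda$ is disjoint from $\{|x|<\tfrac12\}$ and meets the line $\{\Re x=\tfrac12\}$ only at $x=\tfrac12$. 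This tangency is the geometric heart of the matter, and it is precisely what fails once the type exceeds $\log2$.

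Next I would introduce two generating functions. The first is $F(x)=\sum_{n\ge0}g(n)x^n$: it has integer coefficients, radius of convergence $\ge\tfrac12$, and equals $\frac{1}{2\pi i}\oint_{|t|=R}\frac{\gamma(t)}{1-xe^t}\,dt$ for small $|x|$; deforming the $t$-contour past the poles $e^t=1/x$ (which only adds further analytic terms) exhibits $F$ as analytic on $\C\setminus L$, and the Borel representation for large $|x|$ shows $F$ is analytic at $\infty$ too. The second comes from the Newton coefficients $a_n:=(\Delta^n g)(0)=\sum_{k\le n}(-1)^{n-k}\binom nk g(k)\in\Z$: since $\Delta$ acts as multiplication by $e^t-1$ through $\gamma$, $a_n=\frac{1}{2\pi i}\oint_{|t|=\log2+\varepsilon}\gamma(t)(e^t-1)^n\,dt$, and $\max_{|t|=\log2+\varepsilon}|e^t-1|=2e^\varepsilon-1$ forces $\limsup_n|a_n|^{1/n}\le1$, so $A(u):=\sum_{n\ge0}a_n u^n$ has integer coefficients and radius of convergence $\ge1$. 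Newton's forward-difference formula $g(n)=\sum_{k\le n}\binom nk a_k$ yields $F(x)=\frac{1}{1-x}A\!\big(\tfrac{x}{1-x}\big)$; the Möbius map $u\mapsto\tfrac{u}{1+u}$ sends the open unit disc onto $\{\Re x<\tfrac12\}$ and the unit circle onto the extended line $\{\Re x=\tfrac12\}\cup\{\infty\}$, and since $\Lambda$ meets that line only at $x=\tfrac12$ it follows that $A$ extends analytically across every point of $|u|=1$ except $u=1$.

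For the endgame: if the radius of $A$ exceeds $1$ then $a_n\to0$, the Newton expansion terminates, $g$ agrees with a polynomial on $\Z_{\ge0}$, and Carlson's theorem (type $\le\log2<\pi$) forces $g$ to equal that polynomial — the conclusion with $P_2=0$. Otherwise the radius is exactly $1$; the unit circle is not a natural boundary of $A$, so Fatou's theorem on power series with integer coefficients gives $A=P/Q$ with $P,Q\in\Z[u]$, $Q(0)=1$, and all zeros of $Q$ roots of unity, whence — $u=1$ being the only point of $|u|=1$ at which $A$ is singular — $Q(u)=(1-u)^m$ and $A(u)=R(u)+\frac{S(u)}{(1-u)^m}$ for polynomials $R,S$. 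Substituting back through $F(x)=\frac{1}{1-x}A(\tfrac{x}{1-x})$ makes $F$ rational with poles only at $x=\tfrac12$ and $x=1$, so comparing Taylor coefficients produces polynomials $P_1,P_2$ with $g(n)=P_1(n)+P_2(n)2^n$ for all large $n$; then $g(z)-P_1(z)-P_2(z)2^z$ is entire of exponential type $\le\log2<\pi$ and vanishes at all sufficiently large non-negative integers, so it vanishes identically by Carlson's theorem, giving $g(z)=P_1(z)+P_2(z)2^z$ on $\C$. The one genuinely delicate point is the analytic continuation of $F$ in the second paragraph — justifying the contour deformation and, above all, pinning down that the singularities of $F$ on the critical circle $|x|=\tfrac12$ collapse to the single point $\tfrac12$; once that is in hand, invoking Fatou, reading off the denominator $(1-u)^m$, and the closing Carlson argument are all routine.
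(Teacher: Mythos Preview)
The paper does not prove Theorem~\ref{P}: it is quoted as P\'olya's classical result and attributed to \cite{Polya}. So there is no ``paper's own proof'' against which to match your argument, and it has to be assessed on its own terms.

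Your route is a correct and classical one, essentially the generating-function approach going back to P\'olya himself and developed further by Buck. The geometry you isolate --- that $L=\exp(-K)$ lies in the lens $\Lambda$, and that $\Lambda$ is tangent to the line $\mathrm{Re}\,x=\tfrac12$ only at $x=\tfrac12$ --- is exactly the mechanism that confines the singularities of $A$ on $|u|=1$ to the single point $u=1$, after which Fatou's integrality theorem and Carlson's theorem finish things off. The step you flag as delicate (the analytic continuation of $F$ off $L$) does need care, but the auxiliary poles $t=-\log x+2\pi i k$ with $k\ne0$ stay away from $K\subseteq\overline{D}(0,\log2)$ because $2\pi>\log2$, so the residues picked up are analytic in $x$ and the continuation goes through.

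It is still worth contrasting your method with the machinery the paper builds for its own main result, the o-minimal analogue Theorem~\ref{main}. There the approach follows Langley rather than P\'olya: one bounds the mixed differences $\Delta^{n-a}(\Delta-1)^a g(a)$ directly by a Cauchy integral over a half-disc contour (Lemmas~\ref{integral} and \ref{bounds}), forces them to vanish by integrality, and concludes via the interpolation Lemma~\ref{inter}. Remark~\ref{bL} notes that these estimates can be pushed to recover, and even slightly extend, Theorem~\ref{P} itself, though the paper does not carry this out. Your generating-function argument is more structural, reducing everything to a rigidity theorem for integer power series; the Langley-style argument is quantitative and, crucially, applies to functions analytic only on a right half-plane $\H(B)$ --- which is exactly what the o-minimal application needs, since Proposition~\ref{3.1} supplies only an analytic approximation to $f$ there, not an entire function.
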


\noindent

\noindent
We show that a similar characterization holds for $\Ranexp$-definable functions under a slightly more lenient restriction on growth.  Our statement can be seen to correspond to Selberg's
extension of Theorem~\ref{P} in \cite{Selberg}, which covers all entire functions that have growth bounded by $e^{(\ln(2)+1/1500)|z|}$.

\noindent

\begin{theorem}\label{main}
Let $f:\R \to\R$ be definable in $\Ranexp$. Suppose $f(a)\in \Z$ for all sufficiently large positive integers $a$ and  that $$|f(x)|= O (2^{(1+10^{-5})x}) \qquad \text{as}\ \ x\to \infty.$$
Then there are polynomials $P_1, P_2 \in \Q[X]$ such that $f(x) = P_1(x) + P_2(x)2^x$ for all sufficiently large $x$. 
\end{theorem}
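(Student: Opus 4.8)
The plan is to combine the tameness of $f$—definability in $\Ranexp$ forces a Hardy-field asymptotic expansion, in terms of the usual logarithmic-exponential monomials $x^{\alpha}(\log x)^{\beta}\cdots e^{\lambda x}$—with a finite-difference analysis that converts the integrality hypothesis into a rigidity statement. Throughout, let $\Delta h(x)=h(x+1)-h(x)$ and $V_2 h(x)=h(x+1)-2h(x)$; both operators preserve definability in $\Ranexp$ and the property of being integer-valued at all sufficiently large integers, $\Delta$ lowers the degree of a polynomial while $V_2$ annihilates $x^k2^x$ (so $V_2^{d+1}$ kills $P_2(x)2^x$ once $d\ge\deg P_2$). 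The growth hypothesis confines every monomial occurring in the expansion of $f$ to be $O(2^{(1+10^{-5})x})$, so the exponential rates $\lambda$ that appear satisfy $0\le\lambda\le(1+10^{-5})\ln 2$. The goal is to show that the only monomials $\gg 1$ in the expansion of $f$ are $x^k$ and $x^k2^x$; once that is known, $r:=f-P_1-P_2\,2^x$ is $o(1)$, definable, and (as $P_1,P_2$ can be taken with bounded denominator $D$) $Dr(n)\in\Z$ with $Dr(n)\to 0$, hence $r$ vanishes on a final segment by o-minimality, and a standard integer-valued-polynomial argument then upgrades the coefficients to $\Q$.

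For the core reduction I would pass to the Newton coefficients $a_n:=\Delta^n f(N_0)$, with $N_0$ a fixed large integer; these are integers, and the asymptotic expansion of $f$ controls their size: a term $x^\alpha(\log x)^\beta$ or $e^{\sqrt x}$ contributes something decaying faster than any geometric rate, a term $e^{\lambda x}$ contributes $\asymp (e^\lambda-1)^n$ (times a polynomial), and $x^k2^x$ contributes a quasi-polynomial in $n$. Hence, \emph{provided} all rates satisfy $e^\lambda\le 2$ (equivalently $\lambda\le\ln 2$) and no "intermediate" monomials intervene, $\limsup|a_n|^{1/n}\le 1$, so $A(t):=\sum_n a_n t^n$ is a power series with integer coefficients and radius of convergence $\ge 1$. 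A Pólya–Carlson / Fatou-type dichotomy (radius $\ge 1$ forcing rationality, after excluding a natural boundary) then makes $A$ rational, so $a_n$ is a quasi-polynomial $\sum_j Q_j(n)\rho_j^n$ whose ratios $\rho_j$ are algebraic integers (roots of the monic reversal of the denominator of $A$) of absolute value $\le 1$; by Kronecker's theorem each $\rho_j$ is $0$, $-1$, or a root of unity, and o-minimality—applied to the definable function reconstructed from the $a_n$ via Newton's forward-difference formula, which cannot match an oscillatory or negative-base quasi-polynomial on all large integers—forces $\rho_j\in\{0,1,-1\}$, i.e.\ $f(x)=P_1(x)+P_2(x)2^x$ for large $x$. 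Here the passage through the Newton series replaces Pólya's and Selberg's use of complex interpolation by a real, o-minimal argument; one may also use $\Delta^N$ and $V_2^M$ to peel off candidate $P_1$ and $P_2$ first, improving the convergence situation before this step.

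The main obstacle is exactly the gap between Wilkie's hypothesis and ours, namely controlling monomials that are "between" $x^k$ and $x^k2^x$. Two families must be excluded by hand: (i) monomials with a genuinely sub-exponential factor at rate $\ln 2$, such as $x^{1/2}2^x$ or $e^{\sqrt x}2^x$, for which the Newton coefficients no longer lie comfortably inside the unit disc and $A(t)$ can acquire a natural boundary—these I expect to rule out using the rigidity of the Hardy-field asymptotic scale together with the integrality of $f(n)$ (a definable function integer-valued at integers cannot carry such a transcendental-growth monomial); and (ii) monomials of rate strictly between $\ln 2$ and $(1+10^{-5})\ln 2$, i.e.\ $\phi^x$ with $2<\phi\le 2^{1+10^{-5}}$, which push $A(t)$'s radius just below $1$. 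Family (ii) is the direct analogue of the difficulty Selberg overcame beyond Pólya's theorem, and it is precisely here that the explicit constant $10^{-5}$ is forced: it must be small enough that no root of the relevant integer polynomial can lurk just above $2$—a phenomenon tied to the distribution of Pisot-type numbers past $2$—and the argument, like Selberg's $1/1500$, will be lossy. I expect the bulk of the work of the paper to lie in (i) and (ii).
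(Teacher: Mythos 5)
Your proposal and the paper take genuinely different routes, but yours has a real, and self-acknowledged, gap at exactly the point that constitutes the new content of the theorem.

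The paper does not pass through Hardy-field expansions, Newton-series generating functions, or the P\'olya--Carlson dichotomy. Instead it follows Wilkie's original scheme: use \cite[Corollary 4.8]{Wilkie} to replace the definable $f$ by a complex-analytic $g$ on a right half-plane that agrees with $f$ up to an error $<e^{-Rx}$ and has explicit growth $|g(z)|\le |C^{z+\epsilon|z|}|$; then bound the \emph{mixed} operator $\Delta^{n-a}(\Delta-1)^a$ applied to $g$ via an explicit contour integral estimate reworked from Langley (Lemma~\ref{integral} and Proposition~\ref{bounds}, built from \cite[Lemma 6]{Langley} and \cite[Lemma 5]{Langley}); conclude that $\Delta^{n-a}(\Delta-1)^a f(a)$ is an integer of absolute value $<1$ (hence $0$) for all $a$ large and all $n\in[Ka,K(a+1)]$ with $K=10^5$; and finally apply the interpolation Lemma~\ref{inter} / Corollary~\ref{intero} to read off $P_1,P_2\in\Q[X]$. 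The value $10^{-5}$ is produced by the concrete constants in the integral estimate and by the choice $K=10^3b$ with $b=100$; it has nothing to do with Pisot numbers.

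The gap in your argument is in your ``family (ii).'' Once you allow exponential rates $\lambda$ with $\ln 2<\lambda\le(1+10^{-5})\ln 2$, the Newton coefficients $a_n=\Delta^n f(N_0)$ grow like $(e^{\lambda}-1)^n$ with $e^{\lambda}-1>1$, so $A(t)=\sum a_nt^n$ has radius of convergence strictly less than $1$. P\'olya--Carlson and the subsequent Kronecker step require radius $\ge1$; you acknowledge the radius drops ``just below $1$'' and then simply say you ``expect the bulk of the work of the paper'' to lie here. That expectation is correct, and nothing in the proposal supplies the missing argument. The appeal to ``Pisot-type numbers past $2$'' is a heuristic, not a mechanism; the actual mechanism that breaches the $\ln 2$ barrier in the literature (Selberg, Langley, and this paper) is to replace the single-variable difference operator $\Delta^n$ by the mixed family $\Delta^{n-a}(\Delta-1)^a$ and to exploit quantitative contour bounds for the resulting divided differences; your scheme of ``peeling off'' with $\Delta^N$ and $V_2^M$ gestures at this but does not carry it out, and in particular one cannot ``peel off a candidate $P_2$'' before knowing that the $2^x$-part is polynomial. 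Your family (i) concern (sub-exponential corrections at rate $\ln 2$) is handled in the paper not by Hardy-field rigidity but simply by absorbing the $e^{\epsilon|z|}$ slack into the same contour estimate, so that step is also organized quite differently, though it is not where your proposal fails.

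A smaller point: you invoke P\'olya--Carlson ``after excluding a natural boundary,'' but excluding the natural-boundary branch is itself nontrivial and is not addressed; and Newton's forward-difference formula reconstructs $f$ only at integer arguments, so passing back to a statement about $f$ on the reals still requires the o-minimality step (Fact~\ref{basico}), which you do use but only implicitly.
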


\noindent
The proof of this theorem broadly follows Wilkie's strategy for the aforementioned result from \cite{Wilkie}. This approach relies on the  important fact, established in \cite{Wilkie}, that an $\Ranexp$-definable function on the reals is {\em closely} approximated by a complex-analytic function  in an appropriate right half-plane. We use this ingredient without change, 
and our main technical contribution lies in the more extensive reworking of Langley's generalization of Theorem~\ref{P}, where the same characterization is achieved for functions
analytic in a right-half plane \cite{Langley}.

In fact, our adaptation can be used to obtain a minor improvement to the primary theorem of Langley; see Remark~\ref{bL}. This small upgrade would certainly not be a novel result, and indeed more comprehensive classifications are known for integer-valued complex-analytic functions, but via different methods (\cite{Buck, Yoshino}). At the end of \cite{Wilkie}, an extensive analog of these classifications is conjectured for $\Ranexp$-definable functions of exponential type, and we view Theorem~\ref{main} as a step towards this larger goal; see Conjecture~\ref{conj: Wilkie}.

\smallskip
\noindent
Our second main theorem is a direct analog of a result by Pila, which studies entire functions satisfying a strong interpolative property. For $k\in \N$, we say that a function $f:\R \to \R$ is {\em $k$-concordant} if for each tuple $m_0,\dots,m_{k}$  of sufficiently large positive integers, there is a polynomial $P\in\Z[X]$ such that $P(m_i)=f(m_i)$ for all $i=0,\ldots, k$. Set $\gamma_k\coloneq\sum_{j=1}^{k} 1/j$, and $\gamma_0=0$ by convention. 

\begin{theorem}\label{concord}
Let $f:\R\to \R$ be definable in $\Ranexp$. Suppose $f$ is $k$-concordant for some $k\in \N$, and that there is $C< e^{\gamma_{k}}+1$ such that $$|f(x)|\le C^x$$
for all large enough $x$. Then there is a polynomial $P\in \Q[X]$ such that $f(x)=P(x)$ for all sufficiently large $x$.
\end{theorem}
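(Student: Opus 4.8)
The plan is to parallel the architecture of the proof of Theorem~\ref{main}: pass from the $\Ranexp$-definable function $f$ to a function holomorphic on a right half-plane, establish the required rigidity there by a Pólya--Pila-type contour argument, and then push the conclusion back down to $f$. By the approximation result of \cite{Wilkie} (the same ingredient invoked for Theorem~\ref{main}) there are $\delta>0$, an integer $N_0$, and a holomorphic function $\hat g$ on $\{\Re z>N_0\}$ with $|f(x)-\hat g(x)|=O(e^{-\delta x})$ as $x\to\infty$ along $\R$, and --- possibly after shrinking the half-plane and enlarging $C$ to some $C_1$ still below $e^{\gamma_k}+1$ --- with $|\hat g(z)|\le C_1^{|z|}$ on $\{\Re z>N_0\}$. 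In particular, for every sufficiently large integer $m$ the value $\hat g(m)$ lies within $O(e^{-\delta m})$ of an integer.

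Next I would record what $k$-concordance really encodes. One checks that $f$ is $k$-concordant precisely when, for every tuple $m_0<\dots<m_k$ of sufficiently large integers, the Lagrange interpolant of degree $\le k$ through the points $(m_i,f(m_i))$ has integer coefficients; equivalently, all Newton coefficients $f[m_0],\,f[m_0,m_1],\,\dots,\,f[m_0,\dots,m_k]$ are integers. Indeed, if $P\in\Z[X]$ is concordant for the tuple then polynomial division by the monic $\prod_i(X-m_i)$ leaves an integer remainder of degree $\le k$ which must, by uniqueness, be the interpolant; conversely, integrality of the Newton coefficients makes the Newton form of the interpolant an integer polynomial. When $k=0$ this is just $f(m)\in\Z$, and since $e^{\gamma_0}+1=2$ the statement specializes exactly to Theorem~5.1 of \cite{Wilkie}. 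Transporting this to $\hat g$ via the approximation, for any sufficiently large integers $m_0,\dots,m_k$ the number $\hat g[m_0,\dots,m_k]$, and more generally each Newton coefficient $\hat g[m_0,\dots,m_i]$, lies within $O(e^{-\delta\min_i m_i})$ of an integer.

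The heart of the argument --- and the place where I expect essentially all the work, namely reworking Langley's treatment in \cite{Langley} and the method behind Pila's entire-function result --- is to deduce from this that $\hat g$ is eventually a polynomial. One selects a suitable staircase of integer nodes, writes the relevant divided-difference quantities as Cauchy integrals $\frac{1}{2\pi i}\oint \hat g(w)K(w)\,dw$ with kernels $K$ whose poles sit at the chosen nodes, deforms the contours within $\{\Re z>N_0\}$, and balances the growth $|\hat g(w)|\le C_1^{|w|}$ against the decay of the kernel through a transfinite-diameter/Stirling estimate over the node sets. The point is that the $k$-concordance structure --- integrality of divided differences over spread-out tuples, not merely of the values --- buys a factor of roughly $\prod_{j=1}^{k}e^{1/j}=e^{\gamma_k}$ of extra room in these estimates, so they force the quantities to $0$ exactly when $C_1<e^{\gamma_k}+1$, generalizing the bare threshold $2$ of Theorem~\ref{P} and of \cite{Wilkie}. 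Combined with the near-integrality from the previous step, this makes the relevant high-order differences of $\hat g$ vanish for all large arguments, and summing the resulting Newton series (which converges in the half-plane thanks to the exponential-type bound on $\hat g$) identifies $\hat g$ there with a polynomial $P$; since $\hat g$ takes near-integer values at large integers, $P\in\Q[X]$.

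Finally I would transfer the conclusion to $f$. With $\hat g=P$ of degree $d$ on the half-plane, $f(x)-P(x)=f(x)-\hat g(x)\to 0$; applying $\Delta^{d+1}$ to $f$ at large integers gives $\Delta^{d+1}f(m)=\Delta^{d+1}(f-P)(m)$, which tends to $0$, while $\Delta^{d+1}f(m)\in\Z$ because $f$ is integer-valued on large integers. Hence $\Delta^{d+1}f(m)=0$ for all large $m$, so $f$ agrees with a polynomial of degree $\le d$ on a cofinal set of integers, necessarily with $P$ since the two differ by $o(1)$ there. Then $f-P$ is $\Ranexp$-definable, tends to $0$, and vanishes at all large integers, so by the monotonicity theorem it vanishes identically for $x$ large, i.e.\ $f(x)=P(x)$ for all sufficiently large $x$. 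The principal obstacle is the contour estimate in the third paragraph: unlike in the entire setting one cannot expand onto arbitrarily large circles, the kernels attached to divided differences at well-separated nodes have a delicate pole structure, and the estimate must be run while carrying the exponentially small approximation error $O(e^{-\delta x})$ throughout --- it is precisely the sharp form of this estimate that produces the constant $e^{\gamma_k}+1$.
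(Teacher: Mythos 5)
Your overall architecture (approximate $f$ by an analytic $g$ on a half-plane via Wilkie's machinery, establish rigidity there, and push back down) matches the paper's, and your reformulation of $k$-concordance in terms of integrality of the Newton coefficients $f[m_0],\ldots,f[m_0,\dots,m_k]$ is correct and a nice observation. But the crucial third paragraph is a description of a strategy, not an argument, and it does not contain the key idea of the actual proof. You only know that divided differences of \emph{order at most $k$} over spread-out integer tuples are integers; to conclude $g$ is a polynomial you need control of arbitrarily high-order differences, and nothing in your sketch explains how $k$-concordance propagates to those. Hoping that choosing spread-out nodes in a Cauchy-integral estimate for high-order divided differences ``buys a factor $e^{\gamma_k}$'' is precisely the gap: without an input from concordance, the contour estimate alone cannot see anything beyond the $k=0$ growth threshold.

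The paper bridges this with an arithmetic amplification step that is absent from your sketch. Lemma~\ref{concordant} shows (via the cyclotomic trace identity of Lemma~\ref{cmain}) that $k$-concordance forces $\Delta^{kp}f(a)\equiv 0\bmod p^{k}$ for every prime $p$, and hence by the prime number theorem that any \emph{nonzero} value of $\Delta^{n}f(a)$ has absolute value at least $e^{\gamma_k n - cn/\log n}$. This is a \emph{lower} bound on the nonzero finite differences, and the role of the contour estimate (Lemma~\ref{bounds1}) is to give a matching \emph{upper} bound $|\Delta^n g(0)|\le C_\star^n$ with $C_\star<e^{\gamma_k}$; the clash of the two forces $\Delta^n f(B)=0$ for large $n$, after which Fact~\ref{Deltao} finishes. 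In other words, the factor $e^{\gamma_k}$ comes not from cleverer contours or transfinite-diameter estimates but from arithmetic divisibility of ordinary forward differences of step $1$; if you try to make your divided-difference route work, you should expect to rediscover exactly this $p^k$-divisibility of $\Delta^{kp}f$, which is the heart of the proof and which your proposal does not supply.
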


\noindent
Pila's theorem~\cite[Theorem 1.3]{Pila} shows that a similar statement holds for entire functions. For $k=0$, Theorem~\ref{concord} is exactly \cite[Theorem 5.1]{Wilkie}, and  for the case of $k=1$, our result corresponds to a theorem  by Perelli and Zannier \cite{PeZ} for entire functions; see Remark~\ref{PeZa}.

\smallskip
\noindent
The present paper fits into the larger theme of interactions of o-minimality with diophantine issues, and we view \cite[Corollary 2.2]{wilkie0} as the starting point for the particular trajectory pursued in this article. The results in \cite{wilkie0} were succeeded also by the celebrated Pila-Wilkie theorem \cite{PW}, a specific variant of which is used in fact by Jones, Thomas, and Wilkie in \cite{JTW}  to obtain a precursor to Theorem~\ref{main}. Moreover,  Jones and Qiu \cite{GQ} obtain similar results for $\Ranexp$-definable functions which are either {\em close} to being integer-valued or take integer values on a sufficiently dense subset of the natural numbers.

\subsection*{Notations and conventions.} Throughout we work with the conventions that $\mu, \nu, i, j,k \in \N\coloneq\{0,1,\ldots\}$, $ a, m, n, B, K, N\in \N^{\ge 1}$, $p$ is prime, $ \epsilon, c, C\in \R^{>0}$, and $r\in\R$. Any symbol with an added subscript will denote an object in the same domain as the plain symbol. So $B_1\in \N^{\ge 1}$, $p_0$ is a prime, $C_{\star}\in \R^{> 0}$, and so on. 

The variable $x$ varies always over the real numbers and $e$ denotes Euler's number. For any $r\in \R$, we set $\H(r)\coloneq\{z\in \C:\ \mathrm{Re}(z) >r\}$; so $\H(0)$ denotes the standard right-half plane. 

We will work with a difference operator $\Delta$ given as follows. For any $f:\R\to \R$, set $\Delta f(x)\coloneq f(x+1)-f(x)$. We let $\Delta^n$  denote the $n^{\mathrm{th}}$-iterate of the $\Delta$ operator, and by a straight-forward induction we have for all $x$ that 
\begin{equation}\label{eq: Delta^n}\Delta^n f(x)\ =\ \sum_{k=0}^n (-1)^{n-k} \frac{n!}{k! (n-k)!} f(x+k),\end{equation}
which can also be seen in \cite{Wilkie}. We shall also work with the $\Delta-1$ operator, given by $(\Delta-1)f(x)=f(x+1)-2f(x)$, and we consider iterates of this operator as well. If $f(x)=2^xh(x)$, then for all $x$ we have that
$(\Delta -1)^nf(x)\ =\ 2^{x+n}\Delta^nh(x)$.

\subsection*{Acknowledgements}  We are deeply grateful to the organisers of the Arizona Winter School 2023 for bringing us together, and making this project possible. We thank  Gabriel Dill and Jonathan Pila for introducing us to the problem which evolved into Theorem~\ref{concord}; and extend a very special thanks to  Gabrielle Scullard, who was part of our working group at the AWS. We express our gratitude also to Gal Binyamini, Gareth Jones, Yuval Salant, and Alex Wilkie for helpful discussions during the preparation of this article.

\section{Technical preliminaries}\label{tech}

\noindent
In this section, we record some important ingredients to be used in the proofs of Theorems~\ref{main} and \ref{concord} in \S\ref{IV} and \S\ref{CIV} respectively.

\medskip
\noindent
Suppose $f:\R \to \R$ be given by $f(x) = P_1(x) + P_2(x)2^x$, for some polynomials $P_1, P_2$. Then $\Delta^n(\Delta-1)^kf=0$ for any $n\ge \deg P_1 + 1$ and any $k \ge \deg P_2 + 1$. The following sharp converse will be crucial for us;  the statement may be viewed as an appropriate \cite[Lemma 5.2]{Wilkie} type analog of \cite[Lemma 3]{Langley}.

\begin{lemma}\label{inter}
Let a function $f:\N\to\R$ be given. Suppose there are $B, K$ such that for all $a\ge B$ we have that $$\Delta^{n-a}(\Delta-1)^{a} f(a)=0$$ for all $n$ satisfying $Ka\le n\le K(a+1)$. Then there are polynomials $P_1, P_2$ such that $f(a) = P_1(a)  + P_2(a)2^a$ for all $a\ge B$.
If moreover, $f(a)\in\Z$ for all $a\ge B$, then we get that $P_1, P_2\in \Q[X]$.
\end{lemma}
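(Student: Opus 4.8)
Write $S$ for the shift operator $(Sg)(x)=g(x+1)$, so that $\Delta=S-1$ and $\Delta-1=S-2$ are commuting operators. The plan is to produce, for \emph{each} $a\ge B$, one fixed linear recurrence satisfied by $f$ on all of $\{a,a+1,\dots\}$, and then to read off $P_1,P_2$ from the case $a=B$. First I would reformulate the hypothesis: for $a\ge B$ put $\psi_a:=(\Delta-1)^af$, and note that as $n$ runs through $[Ka,K(a+1)]$ the hypothesis says exactly that $\Delta^{(K-1)a+i}\psi_a(a)=0$ for $i=0,1,\dots,K$, i.e.\ $K+1$ consecutive powers of $\Delta$ kill $\psi_a$ at the single point $a$. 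Using the identity $\Delta^{m+1}g(c)=\Delta^mg(c+1)-\Delta^mg(c)$ repeatedly, I would upgrade these to $\Delta^{(K-1)a+i}\psi_a(a+j)=0$ for all $i+j\le K$; in particular the ``corner'' sequence $\chi:=\Delta^{(K-1)a}(\Delta-1)^af$ vanishes at the $K+1$ consecutive points $a,a+1,\dots,a+K$.

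The key step is to show that $\chi$ in fact vanishes at \emph{every} integer $\ge a$. I would do this by induction on $t\ge 0$: after invoking the hypotheses at the parameters $a,a+1,\dots,a+t$ one knows $\chi(x)=0$ for $a\le x\le a+(t+1)(K+1)-1$, the base case $t=0$ being the previous paragraph. For the inductive step, the propagated vanishing coming from the parameter $a+t+1$ reads $\Delta^{(K-1)(a+t+1)}\psi_{a+t+1}(a+t+1+j)=0$ for $j=0,\dots,K$; since $\psi_{a+t+1}=(\Delta-1)^{t+1}\psi_a$ and the operators commute, this operator is $\Delta^{(K-1)(t+1)}(\Delta-1)^{t+1}\chi$, which is $S$-monic of degree $K(t+1)$. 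Hence each of these $K+1$ equations solves for one ``new'' value of $\chi$ as a linear combination of values at strictly smaller arguments, and a short index computation shows the smaller arguments lie in the range already controlled by the inductive hypothesis while the new ones are precisely $a+(t+1)(K+1),\dots,a+(t+2)(K+1)-1$. Letting $t\to\infty$ gives $\chi\equiv0$ on $\{a,a+1,\dots\}$ for every $a\ge B$. This bookkeeping — arranging that the length-$(K+1)$ windows arising from consecutive parameters chain together to cover the whole half-line — is the crux of the lemma and the step I expect to need the most care.

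Specializing to $a=B$, this says $f$ satisfies, at every integer $x\ge B$, the linear recurrence with monic characteristic polynomial $(X-1)^{(K-1)B}(X-2)^B$. Its solution space on $\{B,B+1,\dots\}$ has dimension $KB$ and is spanned by $x^i$ for $0\le i<(K-1)B$ and by $x^j2^x$ for $0\le j<B$; these $KB$ functions are linearly independent on any infinite set, so they are a basis. Hence there exist $P_1,P_2\in\R[X]$, with $\deg P_1<(K-1)B$ and $\deg P_2<B$, such that $f(a)=P_1(a)+P_2(a)2^a$ for all $a\ge B$, which is the first assertion.

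For the ``moreover'' clause, assume $f(a)\in\Z$ for $a\ge B$. Applying $\Delta^{(K-1)B}$ annihilates $P_1$, and using $\Delta\circ(2^x\,\cdot\,)=2^x\circ(2\Delta+1)$ one gets $\Delta^{(K-1)B}f(a)=2^aR(a)$ for $a\ge B$, where $R:=(2\Delta+1)^{(K-1)B}P_2$ has degree $<B$. The left-hand side is an integer combination of $f(a),\dots,f(a+(K-1)B)\in\Z$, so $u(a):=2^aR(a)\in\Z$; since $u$ satisfies $(\Delta-1)^Bu=0$, the polynomial $R$ is the interpolant of degree $<B$ through the $B$ rational points $\bigl(a,\,2^{-a}u(a)\bigr)$, $a=B,\dots,2B-1$, so $R\in\Q[X]$. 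Because $2\Delta+1=1+2\Delta$ is unipotent on polynomials of bounded degree, it is invertible over $\Q[X]$, whence $P_2=\bigl((2\Delta+1)^{(K-1)B}\bigr)^{-1}R\in\Q[X]$; finally $P_1(a)=f(a)-P_2(a)2^a\in\Q$ for all $a\ge B$, so $P_1\in\Q[X]$ as well.
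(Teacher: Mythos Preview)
Your argument is correct and rests on the same chaining induction as the paper, though organized differently. The paper defines $P_1,P_2$ at the outset by interpolation on $\{B,\dots,KB+B-1\}$, sets $h:=P_1+P_2\cdot 2^{(\cdot)}-f$, observes that $h$ satisfies the same hypothesis, and proves by induction on $a$ that $h(m)=0$ for $B\le m\le (K+1)a-1$; each step uses the $K+1$ relations $\Delta^{n-a}(\Delta-1)^a h(a)=0$ directly (they are $S$-monic of degree $n$) to solve for $h(a+Ka),\dots,h(a+K(a+1))$ in terms of earlier values. You instead first perform a triangular propagation at each parameter to get $\chi_{a}:=\Delta^{(K-1)a}(\Delta-1)^a f$ vanishing on a window of length $K+1$, then chain these windows to force $\chi_B\equiv 0$, and only afterwards invoke the solution theory of the recurrence to manufacture $P_1,P_2$. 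Both routes pivot on the same bookkeeping that consecutive parameters contribute exactly $K+1$ new vanishing values; the paper's ordering simply avoids your extra preprocessing. The main practical difference is the ``moreover'' clause: in the paper it is a one-liner, since $P_1,P_2$ are obtained from a $KB\times KB$ linear system with integer matrix and integer right-hand side, hence have rational coefficients by Cramer's rule; your operator-theoretic argument via the unipotence of $2\Delta+1$ is correct but appreciably longer.
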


\begin{proof}
Let $f:\N\to \R$ be given, and suppose $B,K$ are  such that for all $a\ge B$, we have that $\Delta^{n-a}(\Delta - 1)^af(a) = 0$ for all $n$ satisfying $Ka\le n \le K(a + 1)$.
Let $P_1$ be a polynomial of degree at most $(K - 1)B-1$ and $P_2$ be a polynomial of degree at most $B-1$ such that
$$P_1(m) + P_2(m)\cdot2^m = f(m)$$
for all $B\le  m \le KB + B-1$. Let $h:\N\to \R$ be given by
$$h(m) = P_1(m) + P_2(m)2^m - f(m).$$
If $n -a \ge (K - 1)B$, then $\Delta^{n - a}P_1(m) = 0$ for all $m$. Similarly, $a \ge B$ implies $(\Delta - 1)^a\big(P_2(m)2^{m}\big) = 2^{m+a}\Delta^a P_2(m)= 0$ for all $m$. So we have for all $a \ge B$ that
$$\Delta^{n - a}(\Delta - 1)^a\big(P_1(a) + P_2(a)2^{a}\big) = 0$$
for all $n\ge Ka$. Then by the linearity of $\Delta$, the definition of $h$, and the starting assumption on $f$, we have for all $a\ge B$ that
$$\Delta^{n - a}(\Delta - 1)^ah(a) = 0$$
for all $n$ satisfying $Ka\le n \le K(a + 1)$.

We shall show that $h(a)=0$ for all $a\ge B$, which clearly proves the first assertion of the lemma. To achieve this goal, we prove by induction on $a$ that 
$$h(m) = 0\quad \text{ for all }\quad B\le m \le Ka +a -1.$$
We have the base case $a=B$ by the definition of $h$. Fix $a\ge B$ and assume the desired statement holds for $a$. It remains to prove the statement for $a+1$, i.e.
$$h(m)=0\quad \text{ for all } \quad Ka + a  \le m \le K(a+1) + a $$
For all $n$ satisfying $Ka\le n\le K(a+1)$, we have that $\Delta^{n - a}(\Delta - 1)^ah(a) = 0$, which gives that 
$$h(a + n) = \text{ a linear combination of }\ h(a), h(a + 1), \ldots , h(a + n - 1).$$
If $n=Ka$, then $a+j\le a+ Ka -1$ for all $j=1,\ldots, n-1$.
By the inductive assumption we get $h(a) = \ldots = h(a + Ka - 1) = 0$, and it follows that $h(a + Ka) = 0$. Proceeding in the same manner, we immediately conclude that $h(m)=0$ for all $m\le a + K(a+1)$, and the proof of the first part of the lemma is complete.

For the second assertion in the lemma, we need only observe that $f(a)\in \Z$ for all $a\ge B$ gives that the polynomials $P_1$ and $P_2$  above have rational coefficients.
\end{proof} 

\noindent
The following simple fact will help handle the approximation error induced from deploying Proposition~\ref{3.1}.
\begin{lemma}\label{error}
Let $K\ge 2$, and suppose $H: \R\to \R $ is such that $|H(x)|\le \exp(-Kx)$ for all sufficiently large $x$. Then for all sufficiently large $a$,   we have that 
$$|\Delta^{n-a}(\Delta-1)^a H(a)| < 1/2$$
for all $a \le n \le K(a+1)$.
\end{lemma}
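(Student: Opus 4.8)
The plan is to bound each term $\Delta^{n-a}(\Delta-1)^a H(a)$ by expanding both iterated difference operators via \eqref{eq: Delta^n} and the analogous binomial expansion for $(\Delta-1)^a$, and then estimating the resulting sum of values of $H$ using the hypothesis $|H(x)|\le \exp(-Kx)$. First I would write $(\Delta-1)^a = \sum_{i=0}^{a}\binom{a}{i}(-2)^{a-i}\,\mathrm{sh}^i$, where $\mathrm{sh}$ denotes the shift $g(x)\mapsto g(x+1)$, so that composing with $\Delta^{n-a}=\sum_{j=0}^{n-a}\binom{n-a}{j}(-1)^{n-a-j}\mathrm{sh}^j$ gives
$$\Delta^{n-a}(\Delta-1)^a H(a)\ =\ \sum_{i=0}^{a}\sum_{j=0}^{n-a}\binom{a}{i}\binom{n-a}{j}(-2)^{a-i}(-1)^{n-a-j} H(a+i+j).$$
Since all arguments $a+i+j$ are at least $a$, for $a$ large enough every $|H(a+i+j)|\le \exp(-K(a+i+j))\le \exp(-Ka)$. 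Taking absolute values and using $\sum_i\binom{a}{i}2^{a-i}=3^a$ and $\sum_j\binom{n-a}{j}=2^{n-a}$ yields the crude bound
$$|\Delta^{n-a}(\Delta-1)^a H(a)|\ \le\ 3^a\cdot 2^{n-a}\cdot \exp(-Ka).$$

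Now I would use the constraint $n\le K(a+1)$, so $n-a\le (K-1)a+K$, giving $2^{n-a}\le 2^{(K-1)a+K}$ and hence the whole expression is at most $2^{K}\cdot 3^{a}\cdot 2^{(K-1)a}\cdot \exp(-Ka) = 2^K\cdot\big(3\cdot 2^{K-1}\cdot e^{-K}\big)^{a}$. The key numeric point is that the base $3\cdot 2^{K-1}e^{-K} = \tfrac{3}{2}\,(2/e)^K$ is strictly less than $1$ for every $K\ge 2$: indeed $2/e<1$, so $(2/e)^K\le (2/e)^2 = 4/e^2 < 0.542$, and $\tfrac32\cdot 0.542 < 1$. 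Therefore the bound tends to $0$ as $a\to\infty$, uniformly over all $n$ in the prescribed range, so in particular it is eventually below $1/2$, which is exactly the claim. (One should also absorb the implicit ``for all sufficiently large $x$'' in the hypothesis: choose $a$ large enough that $x\ge a$ already forces $|H(x)|\le\exp(-Kx)$.)

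The estimate is deliberately wasteful — bounding every $|H(a+i+j)|$ by $\exp(-Ka)$ rather than exploiting the faster decay at larger arguments — but it is clean and more than sufficient, so I see no real obstacle here; the only thing to get right is the bookkeeping that makes the exponential base come out below $1$, which is where the hypothesis $K\ge 2$ is used. If one wanted the sharper threshold later, one could instead sum $\sum_j\binom{n-a}{j}\exp(-Kj)=(1+e^{-K})^{n-a}$ and $\sum_i\binom{a}{i}2^{a-i}\exp(-Ki)=(2+e^{-K})^a$, but this refinement is unnecessary for the stated lemma.
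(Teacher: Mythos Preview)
Your proposal is correct and follows essentially the same route as the paper: both expand the operators binomially, crudely bound every $|H(a+i+j)|$ by $\exp(-Ka)$, and arrive at the identical estimate $2^{n}(3/2)^a e^{-Ka}\le 2^K\big((3/2)(2/e)^K\big)^a$, which tends to $0$ for $K\ge 2$. The only cosmetic difference is that the paper expands $(\Delta-1)^a$ in powers of $\Delta$ rather than of the shift operator, but the arithmetic and the final bound coincide.
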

\begin{proof} Let $B$ be such that $|H(x)|\le \exp(-Kx)$ for all $x\ge B$. Fix an $a\ge B$ and an $a \le n \le K(a+1)$. Then we have that
\begin{align*}
    |\Delta^{n-a}(\Delta-1)^a H(a)|\ &\le \ \sum_{j=0}^a  \frac{a!}{j! (a-j)!}\cdot |\Delta^{n-j}H(a)| \\
    & \le\ \sum_{j=0}^a  \frac{a!}{j! (a-j)!}\cdot 2^{n-j} \exp(-Ka)\\
    & \le\ 2^n\cdot(3/2)^a\cdot \exp(-Ka)\\
    & \le\ 2^K\cdot\big( 2^K\cdot (3/2)\cdot\exp(-K)\big)^a,
\end{align*}
using that $n\le K(a+1)$.
Since $K\ge 2$, we have that $(3/2)\cdot (2/e)^K<1$. Hence for all sufficiently large $a$,  we get that $|\Delta^{n-a}(\Delta-1)^a H(a)|<1/2$  for all $n \le K(a+1)$, as desired.
\end{proof}

\subsection*{O-minimality} In this paper, as in \cite{Wilkie}, we work only with a particular o-minimal structure-- $\Ranexp$. A rich class of sets and functions are definable in $\Ranexp$, and this structure appears ubiquitously in applications of o-minimality to diophantine geometry. See \cite{vdDMM} for a description of sets definable in $\Ranexp$.

We shall use an immediate consequence of the defining axioms of o-minimality;  see \cite[Appendix A]{BD}.

\begin{fact}\label{basico}
Let $f, g:\R \to \R$ be definable in $\Ranexp$. Suppose $f(a)=g(a)$ for all sufficiently large $a$, then $f(x)=g(x)$ for all sufficiently large $x$. 
\end{fact}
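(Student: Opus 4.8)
The plan is to pass to the difference $h \coloneq f - g$, which is again definable in $\Ranexp$, and to show that the hypotheses force $h$ to vanish identically on some ray $(N,\infty)$. First I would invoke the basic o-minimality axiom recalled in \cite[Appendix A]{BD}: every subset of $\R$ definable in $\Ranexp$ is a finite union of points and open intervals. Applying this to the zero set $Z \coloneq \{x \in \R : h(x) = 0\}$, which is definable since $h$ is definable, we may write $Z = F \cup I_1 \cup \cdots \cup I_m$ with $F$ finite and each $I_j$ an open interval (some possibly unbounded).

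Next I would split into two cases according to whether one of the $I_j$ is a ray of the form $(c,\infty)$. If some $I_j = (c,\infty)$, then $h$ vanishes on all of $(c,\infty)$, so $f(x) = g(x)$ for all $x > c$ and we are done. Otherwise every $I_j$, as well as the finite set $F$, is bounded above, and hence $Z$ is bounded above, say $Z \subseteq (-\infty, N]$ for some $N$. But the hypothesis $f(a) = g(a)$ for all sufficiently large positive integers $a$ says precisely that $Z$ contains every integer $a \ge B$ for some $B$; choosing any integer $a > \max(N,B)$ then contradicts $Z \subseteq (-\infty,N]$. So the second case cannot occur, which completes the argument.

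There is essentially no real obstacle here; the only points requiring care are citing the o-minimality input in the right form (finiteness of the decomposition of a definable subset of the line into points and intervals) and the elementary observation that a finite union of points together with intervals that are all bounded above cannot contain arbitrarily large integers. One could alternatively run the monotonicity theorem for $h$ near $+\infty$, but the description of definable subsets of $\R$ gives the shortest route.
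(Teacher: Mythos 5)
Your argument is correct and is exactly what the paper has in mind: it states this Fact without proof as ``an immediate consequence of the defining axioms of o-minimality,'' and the intended argument is precisely yours, namely that the zero set of $f-g$ is definable, hence a finite union of points and intervals, and containing all sufficiently large integers forces one of these intervals to be a ray $(c,\infty)$.
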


\noindent
This immediately gives the following improvement to Lemma~\ref{inter}.

\begin{corollary}\label{intero}
Let $f:\R\to\R$ be definable in $\Ranexp$. Suppose there is $K$ such that all sufficiently large $a$, we have that $f(a)\in \Z$ and $$\Delta^{n-a}(\Delta-1)^{a} f(a)=0$$ for all $n$ satisfying $Ka\le n\le K(a+1)$. Then there are $P_1, P_2 \in \Q[X]$ such that $f(x) = P_1(x)  + P_2(x)2^x$ for all sufficiently large $x$.  \end{corollary}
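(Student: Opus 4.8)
The plan is to obtain this as the o-minimal upgrade of Lemma~\ref{inter}, using Fact~\ref{basico} to pass from natural-number arguments to real arguments. First I would fix a single threshold $B$ with the property that $f(a)\in\Z$ and $\Delta^{n-a}(\Delta-1)^a f(a)=0$ for all $n$ with $Ka\le n\le K(a+1)$, for every $a\ge B$; such a $B$ exists because each of these conditions holds for all sufficiently large $a$ by hypothesis.

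Next I would apply Lemma~\ref{inter} to the restriction of $f$ to $\N$, with the parameters $B$ and $K$ just chosen. The hypotheses of the lemma are precisely what we have arranged, so it yields polynomials $P_1, P_2$ such that $f(a)=P_1(a)+P_2(a)2^a$ for all $a\ge B$; and since $f(a)\in\Z$ for all $a\ge B$, the final clause of the lemma gives $P_1, P_2\in\Q[X]$. Now define $g:\R\to\R$ by $g(x)=P_1(x)+P_2(x)2^x$. Since $2^x=\exp(x\ln 2)$ and polynomials are definable, $g$ is definable in $\Ranexp$, and by construction $g(a)=f(a)$ for all sufficiently large positive integers $a$. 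Applying Fact~\ref{basico} to the definable functions $f$ and $g$ then gives $f(x)=g(x)=P_1(x)+P_2(x)2^x$ for all sufficiently large $x$, as required.

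I do not expect any genuine obstacle here: each step is either a direct citation of a result already in hand or the trivial remark that $P_1(x)+P_2(x)2^x$ is $\Ranexp$-definable. The only point needing the slightest care is the bookkeeping of the threshold, namely combining the two ``for all sufficiently large $a$'' hypotheses into one value $B$ from which Lemma~\ref{inter} can be invoked.
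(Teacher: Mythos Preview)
Your argument is correct and is exactly the approach the paper has in mind: the corollary is stated there as an immediate consequence of combining Lemma~\ref{inter} with Fact~\ref{basico}, and your write-up simply spells out that combination (together with the trivial observation that $P_1(x)+P_2(x)2^x$ is $\Ranexp$-definable). There is nothing to add.
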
 


\noindent
Our next result is a version of \cite[Corollary 4.8]{Wilkie}, and is a key ingredient which allows access to the complex-analytic realm; see also \cite[Theorem 3.1]{GQ}. Set
$$\Rsubexp\coloneq\{f:\R\to \R \text{ definable in $\Ranexp$}: \text{$\forall \epsilon$ $\exists r$ such that $|f(x)|\le e^{\epsilon x}$, $\forall x\ge r$} \}.$$

\noindent
Recall that $\H(B)\subset \C$ denotes the right-half plane comprising complex numbers with real part greater than $B$.
\begin{proposition}\label{3.1}
Let $f:\R\to\R$ be definable in $\Ranexp$ and suppose there are $c>0$ and $C>1$ such that $$|f(x)|\le c\cdot C^x$$
for all sufficiently large $x$. Then for any positive reals $\epsilon$ and $R$, there is $B$ and an analytic function $g: \H(B)\to \C$ such that
\begin{enumerate}
    \item[$\rm(i)$] $|f(x)-g(x)|< e^{-Rx}$ for all $x > B$,
   \item[$\rm(ii)$]  $|g(z)|\le  |C^{z+\epsilon|z|}|$ for all $z\in \H(B)$.
\end{enumerate}
\end{proposition}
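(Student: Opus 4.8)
The plan is to obtain Proposition~\ref{3.1} by a short reduction to \cite[Corollary 4.8]{Wilkie}, which provides precisely this kind of statement in the subexponential regime: for every $h\in\Rsubexp$ and all positive reals $\delta$ and $R$, there is $B$ and an analytic function $\hat h:\H(B)\to\C$ such that $|h(x)-\hat h(x)|<e^{-Rx}$ for all $x>B$ and $|\hat h(z)|\le e^{\delta|z|}$ for all $z\in\H(B)$. The point of the reduction is that dividing out the factor $C^x$ turns the hypothesis $|f(x)|\le cC^x$ into a bounded (hence \emph{a fortiori} subexponential) estimate, while afterwards multiplying the approximant by $C^z$ does no damage, since $C^z$ is entire and $|C^z|=e^{(\ln C)\,\mathrm{Re}(z)}$ is completely explicit.

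In detail, set $\lambda\coloneq\ln C$, which is positive because $C>1$, and let $h:\R\to\R$ be $h(x)\coloneq f(x)\,e^{-\lambda x}$. Since $f$ and $x\mapsto e^{-\lambda x}$ are $\Ranexp$-definable, so is $h$, and $|f(x)|\le cC^x$ for large $x$ gives $|h(x)|\le c$ for large $x$; hence $h\in\Rsubexp$. I would then apply \cite[Corollary 4.8]{Wilkie} to $h$ with $\epsilon\lambda$ in the role of its $\delta$ and $R+\lambda$ in the role of its $R$, producing $B$ and an analytic $\hat h:\H(B)\to\C$ with $|h(x)-\hat h(x)|<e^{-(R+\lambda)x}$ for $x>B$ and $|\hat h(z)|\le e^{\epsilon\lambda|z|}$ for $z\in\H(B)$; after enlarging $B$ so that also $|h(x)|\le c$ for $x>B$, define $g:\H(B)\to\C$ by $g(z)\coloneq\hat h(z)\,C^z$, which is analytic on $\H(B)$ as a product of analytic functions.

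Checking the two conclusions is then immediate. For $\rm(i)$, on the real line $|f(x)-g(x)|=e^{\lambda x}\,|h(x)-\hat h(x)|<e^{\lambda x}e^{-(R+\lambda)x}=e^{-Rx}$ for all $x>B$. For $\rm(ii)$, for $z\in\H(B)$,
$$|g(z)|=|C^z|\cdot|\hat h(z)|\ \le\ e^{\lambda\,\mathrm{Re}(z)}\,e^{\epsilon\lambda|z|}\ =\ e^{\lambda(\mathrm{Re}(z)+\epsilon|z|)}\ =\ |C^{z+\epsilon|z|}|,$$
the final equality holding because $\mathrm{Re}(z)+\epsilon|z|$ is real.

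There is, accordingly, essentially no obstacle in the present proposition: all of the substance is carried by \cite[Corollary 4.8]{Wilkie}, whose proof is the genuinely hard step and rests on the structure of $\Ranexp$-definable unary functions near $+\infty$ --- their eventual expressibility by exp--log terms and the resulting analytic continuation to a right half-plane with controlled growth. Everything above it is bookkeeping; as an alternative to the reduction, one could instead simply repeat Wilkie's proof of his Corollary 4.8 with the bound $e^{\delta|z|}$ replaced throughout by $|C^{z+\epsilon|z|}|$.
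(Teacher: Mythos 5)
Your reduction --- divide out $C^{x}$, handle the resulting bounded function, and then multiply back by $C^{z}$ --- is clean bookkeeping, and the two verifications of $\rm(i)$ and $\rm(ii)$ are correct. The problem lies in what you attribute to \cite[Corollary 4.8]{Wilkie}. As the paper's own proof shows, that corollary is a \emph{decomposition} theorem on the real line: it produces $N$, reals $s_1,\ldots,s_N$, subexponential $\Ranexp$-definable $f_1,\ldots,f_N$, and $B_0$ with
\[
\Bigl|f(x)-\textstyle\sum_{j=1}^{N}f_j(x)\,C^{s_j x}\Bigr|<e^{-Rx}\qquad(x>B_0),
\]
and it says nothing about analytic continuation or bounds in the complex plane. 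The complex-analytic step is a separate ingredient, \cite[Lemma 5.3]{Wilkie}, which continues each subexponential piece $f_j$ to a right half-plane with growth $\le C^{\epsilon|z|}/N$. The statement you quote as ``Corollary 4.8'' --- that every $h\in\Rsubexp$ admits, for each $\delta,R$, an analytic $\hat h$ on a half-plane with $|h-\hat h|<e^{-Rx}$ on the reals and $|\hat h(z)|\le e^{\delta|z|}$ --- is not that corollary; it is essentially Proposition \ref{3.1} specialised to the subexponential regime, and proving it still requires running the decomposition of Corollary 4.8 (on $h$) followed by Lemma 5.3 on the pieces. So the ``reduction'' as written replaces the proposition with a special case of itself, citing a theorem that does not exist in the stated form.

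Put differently: once you accept the subexponential case, your multiplication-by-$C^{z}$ step is fine and arguably a tidier packaging than the paper's. But you have not justified the subexponential case. If you replace the citation by the honest combination --- Corollary 4.8 applied to $h=fC^{-x}$ (which, being bounded, trivially satisfies any growth hypothesis with $D>1$ arbitrarily close to $1$), producing pieces $h_j D^{t_j x}$, then Lemma 5.3 on each $h_j$, then summing and absorbing the finite factor $N$ into an enlarged $B$ --- the argument closes. Your closing remark, that one could ``repeat Wilkie's proof of his Corollary 4.8 with the bound $e^{\delta|z|}$ replaced by $|C^{z+\epsilon|z|}|$'', reinforces the confusion: Corollary 4.8 has no complex growth bound in it to replace, so there is no such substitution to make.
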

\begin{proof}
Let $f$ be as above, and let $\epsilon, R\in \R^{>0}$ be given. Then 
\cite[Corollary 4.8]{Wilkie} gives $N$, $s_1, \ldots, s_N\in \R$, $f_1,\ldots,f_N\in \Rsubexp$, and $B_0$ such that, for all $x> B_0$, $$\left|f(x) - \sum_{j=1}^N f_j(x)\cdot C^{s_jx}\right|<\exp(-Rx).$$ 
We can and do assume that the $s_j$ are distinct, and then the growth condition on $f$ implies that $s_j\le 1$ for all $j=1,\ldots, N$. For each $j\in \{1,\ldots, N\}$, set $g_j(x)\coloneq f_j(x)\cdot C^{s_jx}$. Since $f_j\in \Rsubexp$,  \cite[Lemma  5.3]{Wilkie} gives $B_j$ and a complex analytic continuation $\hat{f}_j:  \H(B_j)\to \C$ of $f_j$ that satisfies $|\hat{f}_j(z)|\le C^{\epsilon|z|}/N$ for all $z\in \H(B_j)$. By the identity theorem, the function $\hg:\H(B_j)\rightarrow \C $ given by $$\hg_j(z)\ \coloneq\ \hat{f}_j(z)C^{s_jz}$$ is an analytic continuation of $g_j$ and $|\hg_j(z)|\le |C^{z+\epsilon|z|}|/N$. 

Observe that $B\coloneq\max\{B_0, B_1,\ldots, B_N\}$ and $g\coloneq\hg_1+\ldots+\hg_N$ are as desired.
\end{proof}

\section{Integer-valued functions definable in $\Ranexp$}\label{IV}

\noindent
We begin this section by revisiting the function $G(k, x, y)$ from \cite[Lemma 6]{Langley}.

\begin{lemma}\label{G}
For fixed $a$, and $k = 0, \ldots, a,$ set
$$G(k, x, y) = \prod_{0 \le q \le k - 1}(1 + x + qy)\prod_{k \le q \le a - 1}(1 - qy),$$
with the convention that a product over an empty range of $q$ is 1.
Let $A_{\mu, \nu}$ denote the coefficients of $\Delta^aG(0, x, y)$, so we have that
$$\Delta^aG(0, x, y)\ =\ \sum_{\substack{\mu \le a,\\ \nu <a}}A_{\mu, \nu}x^\mu y^\nu$$
Then for all $\mu\le a$ and $\nu<a$, $A_{\mu,\nu}=0$ if $\mu +2\nu<a$, and
$|A_{\mu,\nu}|\le 6^a a^{\nu}$.
\end{lemma}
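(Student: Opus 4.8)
The plan is to analyze the polynomial $G(0,x,y)$ and track how applying $\Delta^a$ (in the $x$-variable, treating $y$ as a parameter) transforms its monomials. First I would record the basic structure of $G(0,x,y)$: it is
$$G(0,x,y)\ =\ \prod_{0\le q\le a-1}(1-qy),$$
a polynomial in $y$ alone of degree $a-1$ (the $x$-factor product is empty when $k=0$), so it is not obvious that $\Delta^a G(0,x,y)$ is nonzero at all --- but the point of Langley's Lemma~6 is presumably that $G(0,x,y)$ is being expanded via a Newton/finite-difference identity relating the $G(k,\cdot,\cdot)$ across $k$, and $\Delta$ acts nontrivially because the difference operator shifts $x\mapsto x+1$ inside those linear factors $1+x+qy$ that appear for $k\ge 1$. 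I would reconstruct the recursion $\Delta_x G(k,x,y)$ in terms of $G(k\pm 1,x,y)$ (essentially $\Delta_x\bigl[(1-ky)\,\widetilde G\bigr]$ telescoping through the product), which is the combinatorial heart of \cite[Lemma 6]{Langley}, and then iterate it $a$ times starting from $k=0$.

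**Key steps.** (1) Establish the identity expressing $\Delta^a G(0,x,y)$ as an explicit signed sum over the family $G(k,x,y)$, $0\le k\le a$, using the product-rule-type recursion for $\Delta_x$ acting on the linear factors; each $G(k,x,y)$ is a polynomial in $x,y$ with total degree $a$ in which every monomial $x^\mu y^\nu$ automatically satisfies $\mu+\nu\le a$. (2) Prove the vanishing claim $A_{\mu,\nu}=0$ when $\mu+2\nu<a$: here the idea is that $\Delta^a$ is a degree-lowering operator of order $a$ in $x$, so a monomial $x^\mu y^\nu$ with $\mu<a$ contributes nothing on its own --- only the interaction across the $k$-sum produces surviving terms, and one shows by induction on $a$ (or directly from the recursion in step (1)) that each surviving monomial must have been "built" from at least $a-\mu$ of the linear factors $1+x+qy$, each of which carries a $y$ when it does not carry the $x$, forcing $\nu\ge \lceil (a-\mu)/2\rceil$, i.e. $\mu+2\nu\ge a$. (3) Prove the size bound $|A_{\mu,\nu}|\le 6^a a^\nu$: expand $G(k,x,y)$ crudely --- each of the $a$ linear factors has the form $1+x+qy$ or $1-qy$ with $|q|\le a$, so substituting and bounding, the coefficient of $x^\mu y^\nu$ in a single $G(k,x,y)$ is at most (number of ways to choose which factors contribute $x$, which contribute the constant, which contribute $qy$) times $a^\nu$; the binomial/multinomial count is crudely $\le 3^a$, and summing the signed expansion of $\Delta^a$ over $k\le a$ with the $\binom{a}{k}$-type coefficients from \eqref{eq: Delta^n} contributes another factor $\le 2^a$, for a total $\le 6^a a^\nu$. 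I would double-check the constant $6$ by being slightly careful in step (3), but any fixed base would do for the downstream argument, so the exact value is not delicate.

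**Main obstacle.** The hard part will be step (1)/(2): correctly setting up the finite-difference recursion relating $G(k,x,y)$ across values of $k$ and extracting from it the precise degree constraint $\mu+2\nu\ge a$. Naively, $\Delta_x$ applied to a product of linear factors in $x$ produces a telescoping sum, but because only the first $k$ factors of $G(k,x,y)$ actually involve $x$, one must see how iterating $\Delta_x$ interacts with the parameter $k$ (which in Langley's setup is coupled to how many factors are "switched on"), and this is where the $2\nu$ rather than $\nu$ in the inequality comes from --- each application of $\Delta$ that does not convert an $x$-factor to a constant effectively pairs a factor $(1-qy)$ with a factor $(1+x+qy)$, contributing $y$-degree $2$ per unit drop in $x$-degree below $a$. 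Tracking this bookkeeping cleanly, rather than the final estimates, is the real content of the lemma.
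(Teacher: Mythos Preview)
Your main confusion is which variable $\Delta$ acts on. In this lemma $\Delta$ is applied to the \emph{first} argument of $G$, i.e.\ to $k$, not to $x$: the identity
\[
\Delta^a G(0,x,y)\ =\ \sum_{k=0}^{a}(-1)^{a-k}\binom{a}{k}G(k,x,y)
\]
is nothing more than formula~(\ref{eq: Delta^n}) applied to the function $k\mapsto G(k,x,y)$ and evaluated at $k=0$. You correctly observe that $G(0,x,y)=\prod_{q=0}^{a-1}(1-qy)$ is independent of $x$, so $\Delta_x^a G(0,x,y)$ would vanish identically; but the right conclusion to draw from this is that $\Delta$ is \emph{not} acting in $x$, not that some hidden recursion rescues the situation. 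Once you read $\Delta$ as $\Delta_k$, your entire step~(1) collapses to a one-line application of~(\ref{eq: Delta^n}); there is no product-rule recursion to set up.

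With this correction, your step~(3) is exactly the paper's argument: bound the coefficient of $x^\mu y^\nu$ in each individual $G(k,x,y)$ by $3^a a^\nu$ (each of the $a$ linear factors offers at most three terms, and each $y$-contribution carries a coefficient $|q|<a$), then sum over $k$ with the binomial weights to pick up the extra $2^a$. For step~(2), the paper simply cites \cite[Lemma~6(ii)]{Langley} rather than reproving the vanishing $A_{\mu,\nu}=0$ for $\mu+2\nu<a$; your proposed direct argument is not unreasonable in spirit, but it is built on the mistaken $\Delta_x$ picture, so the ``pairing'' heuristic you describe (each missing $x$ forces two $y$'s) would need to be reformulated from scratch in terms of $\Delta_k$ before it could be assessed.
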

\begin{proof} For each $k\in \{0,\ldots,a\}$, $G(k,x,y)$ is a polynomial 
$$G(k,x,y)=\sum_{\substack{\mu\le a,\\\nu <a}} B_{\mu,\nu}(k) x^{\mu}y^{\nu}.$$
Fix some $\mu\le a$ and $\nu<a-1$. Then \cite[Lemma 6 (ii)]{Langley} implies that $A_{\mu,\nu}=0$ if $\mu+2\nu<a$.

Towards the second assertion, note that  $|B_{\mu,\nu}(k)|\le 3^a a^{\nu}$ for all $k\le a$.
Then the formula (\ref{eq: Delta^n}) gives
$$\Delta^aG(0, x, y) = \sum_{k = 0}^{a}(-1)^{a - k}\frac{a!}{k!(a-k)!}G(k, x, y).$$
It follows that $|A_{\mu, \nu}|\le \sum_{k = 0}^{a} \frac{a!}{k!(a-k)!} |B_{\mu,\nu}(k)|$, and hence $|A_{\mu, \nu}|\le 6^a a^{\nu}$. 
\end{proof}

\noindent
Next, we have a detailed reworking of \cite[Lemma 5]{Langley}.

\begin{lemma}\label{integral}
For $n$ and any positive real $s<n$, let $\mathscr{C}_{n,s}$ be the contour consisting of the arc $ \Omega_{n,s}$ of the circle $|z|=2n$ from $-s-i\sqrt{4n^2-s^2}$ to $-s+i\sqrt{4n^2-s^2}$ described once counter-clockwise, followed by the line segment $T_{n,s}$ from $-s+i\sqrt{4n^2-s^2}$ to $-s-i\sqrt{4n^2-s^2}$. Then there are positive integers $b,d$ such that for all $n\ge 4$, $ 2\le s\le n/2$, and $\mu\le s$ we have that
$$I_{n,s} = \int_{\Omega_{n,s}} \frac{n! 2^{|z|}}{|z|\ldots|z-n|} \left|\frac{z-2n}{n}\right|^{\mu} |dz|\quad <\quad d\cdot (bs/n)^{\mu/2},$$ 
$$J_{n,s} = \int_{T_{n,s}} \frac{n!} {|z|\ldots|z-n|} \left|\frac{z- 2n}{n}\right|^{\mu} |dz|\quad < \quad d\cdot(bs/n)^{s-1} .$$
In fact, the choice of $b=d=100$ suffices.
\end{lemma}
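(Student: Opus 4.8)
The plan is to estimate the two integrals $I_{n,s}$ and $J_{n,s}$ separately by bounding the integrand on each piece of the contour $\mathscr{C}_{n,s}$. Both integrals feature the factor $n!/(|z|\cdots|z-n|)$, which is the modulus of the reciprocal of the polynomial $z(z-1)\cdots(z-n)/n!$, and the factor $|(z-2n)/n|^{\mu}$. The key quantitative observations to set up first are: (a) on the whole contour $\mathrm{Re}(z) \le -s < 0$, so every factor $|z-q|$ with $0 \le q \le n$ is at least $\sqrt{q^2 + s^2} \ge \max(q,s)$, hence $|z|\cdots|z-n| \ge s \cdot 1! \cdot 2! $-type bounds — more usefully, $|z-q| \ge q + s$ for... actually $|z - q| = \sqrt{(\mathrm{Re}\,z + q)^2 + (\mathrm{Im}\,z)^2}$; since $\mathrm{Re}(z)\le -s$ we only get $|z-q|\ge |q - s|$ in general, so one must be a little careful and split at $q$ near $s$. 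The cleanest route: $|z-q|^2 = (\mathrm{Re}\, z)^2 + (\mathrm{Im}\, z)^2 - 2q\,\mathrm{Re}\,z + q^2 \ge s^2 + q^2$ whenever $\mathrm{Re}\, z \le 0$ — wait, that needs $-2q\,\mathrm{Re}\,z \ge 0$, true since $\mathrm{Re}\,z \le -s < 0$. So in fact $|z-q| \ge \sqrt{s^2+q^2} \ge \max(s, q)$ on the entire contour. Therefore $|z|\cdots|z-n| \ge \prod_{q=0}^{n}\sqrt{s^2+q^2}$, and comparing with $n! = \prod_{q=1}^n q$ gives $n!/(|z|\cdots|z-n|) \le \big(\prod_{q=0}^n q/\sqrt{s^2+q^2}\big)\cdot(1/\text{(the }q=0\text{ term)})$; the $q=0$ term in the denominator is $|z| \ge s$, and for $q \ge 1$ each ratio $q/\sqrt{s^2+q^2} \le 1$, so crudely $n!/(|z|\cdots|z-n|) \le 1/s$ on all of $\mathscr{C}_{n,s}$. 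One also needs the factor $|(z-2n)/n|^\mu$: on the arc $|z|=2n$ we have $|z - 2n| \le 4n$ so this is $\le 4^\mu$, and more precisely on $\Omega_{n,s}$ the arc is near $-2n$... hmm, on $\Omega_{n,s}$, $\mathrm{Re}\,z \le -s$, so $|z-2n|^2 = |z|^2 - 4n\,\mathrm{Re}\,z + 4n^2 = 4n^2 - 4n\,\mathrm{Re}\,z + 4n^2 \ge 8n^2 + 4ns$; this is where the $(bs/n)^{\mu/2}$ gain must actually come from — one needs an \emph{upper} bound on $|z-2n|/n$ that is small, but on the arc $|z-2n|/n$ is of order $\sqrt{8 + 4s/n} \le 4$ once $s \le n/2$, so it is \emph{bounded} but not small, and the smallness $(bs/n)^{\mu/2}$ cannot come from this factor alone.

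So the real mechanism (following Langley) must be different: the gain $(bs/n)^{\mu/2}$ for $I_{n,s}$ comes from combining the factor $|z-2n|^\mu$ with a sharper lower bound on $|z|\cdots|z-n|$ that beats $n!$ by a factor growing in $\mu$, OR from the observation that $2^{|z|} = 2^{2n}$ on the arc while $n!/(|z|\cdots|z-n|)$ is genuinely tiny — of order $\prod_{q=0}^n q/\sqrt{q^2+s^2}$, which for $s$ of moderate size is exponentially small in a way that dominates $2^{2n}$. Let me reconsider: $\prod_{q=1}^{n} \frac{q}{\sqrt{q^2+s^2}}$. For $q \gg s$ the factor is $\approx 1 - s^2/(2q^2)$, contributing a convergent product; for $q \lesssim s$ the factor is $\approx q/s$, and $\prod_{q=1}^{s} q/s \approx s!/s^s \approx e^{-s}\sqrt{2\pi s}$ by Stirling. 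So $n!/(|z|\cdots|z-n|) \lesssim C \cdot e^{-s}\cdot(\text{poly})$ uniformly, which is \emph{not} enough to kill $2^{2n}$ on the arc. Hence I must be more careful on the arc: there, $|z|\cdots|z-n|$ is much larger than on the line segment because $z$ has large imaginary part over most of the arc, so $|z - q| \ge |\mathrm{Im}\, z|$ which is of order $n$ for most of the arc. The right estimate on $\Omega_{n,s}$: parametrize $z = 2n e^{i\theta}$; then $|z-q| \ge |2n\sin\theta|$ for... no. Better: $|z-q|^2 = 4n^2 - 4nq\cos\theta + q^2$, and since on the arc $\cos\theta \le -s/(2n)$ (because $\mathrm{Re}\,z \le -s$), we get $|z-q|^2 \ge 4n^2 + 2sq + q^2 \ge 4n^2$ for every $q \in \{0,\dots,n\}$, hence $|z|\cdots|z-n| \ge (2n)^{n+1}$, giving $n!/(|z|\cdots|z-n|) \le n!/(2n)^{n+1} \le (1/2)(e/(2n))^n \cdot(\text{poly})$ — doubly-exponentially small, which easily absorbs $2^{2n} \le 4^n$ and the arc length $\le 2\pi\cdot 2n$. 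Then the $(bs/n)^{\mu/2}$ factor: on the arc $|(z-2n)/n|^\mu$, using $|z-2n| \le 4n$ gives $4^\mu$, which needs to be re-expressed — but since $\mu \le s \le n/2$ we have... this still is not obviously $\le d(bs/n)^{\mu/2}$. Hmm, unless on the arc there's a better bound on $|z-2n|$: $|z-2n|^2 = 8n^2 - 4n\cdot\mathrm{Re}\,z \ge 8n^2 + 4ns$, that's a \emph{lower} bound again. The upper bound is $|z-2n| \le |z| + 2n = 4n$. So on the arc, $|(z-2n)/n|^\mu \le 4^\mu$ and there is genuinely no smallness; the smallness in $I_{n,s} < d(bs/n)^{\mu/2}$ for the arc contribution must be spurious in the sense that $(bs/n)^{\mu/2}$ with $b = 100$ and $s \ge 2, \mu \le s \le n/2$ — if $\mu$ is large relative to... no wait, $(100s/n)^{\mu/2}$ could be larger than 1 when $s/n$ is not tiny (e.g. $s = n/2$ gives $(50)^{\mu/2}$, huge). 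So actually the bound $d(bs/n)^{\mu/2}$ is only a good (small) bound when $s \ll n$, and when $s$ is comparable to $n$ it is a weak bound that $4^\mu$ easily satisfies (since $4^\mu \le 50^{\mu/2}$). Good — so the arc estimate is: integrand $\le (2n)^{-(n+1)}n! \cdot 2^{2n}\cdot 4^\mu$, integrate over length $\le 4\pi n$, and check $4^\mu \cdot (\text{doubly exp small}) < d(bs/n)^{\mu/2}$ in both regimes ($s$ small: LHS is tiny; $s$ large: $4^\mu \le (bs/n)^{\mu/2}$ when $bs/n \ge 16$, i.e. $b=100, s\ge n/6$, and for $n/6 \le s$... need to also cover $s$ in between, where the doubly-exponentially-small prefactor saves us).

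For $J_{n,s}$, on the line segment $T_{n,s}$ we have $\mathrm{Re}\, z = -s$ exactly, $|\mathrm{Im}\,z| \le \sqrt{4n^2-s^2} \le 2n$, and there is no $2^{|z|}$ factor. Here $|z-q|^2 = (q+s)^2 + (\mathrm{Im}\,z)^2 \ge (q+s)^2$, so $|z|\cdots|z-n| \ge \prod_{q=0}^n (q+s) = \frac{(n+s)!}{(s-1)!}$, giving $n!/(|z|\cdots|z-n|) \le \frac{n!\,(s-1)!}{(n+s)!} = 1/\binom{n+s}{s}\le (s/(n+s))^s \cdot (\text{something})$ — actually $\binom{n+s}{s} \ge (n/s)^s$ roughly, so $n!/(|z|\cdots|z-n|) \le (s/n)^s$ up to constants. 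Then $|(z-2n)/n|^\mu$: on the segment $|z - 2n| \le |\,{-s} - 2n| + |\mathrm{Im}\,z| \le (2n+s) + 2n \le 5n$ (using $s \le n/2$), so $\le 5^\mu \le 5^s$ (since $\mu \le s$). Integrate over length $\le 4n$. Combining: $J_{n,s} \lesssim 4n \cdot (s/n)^s \cdot 5^s \le d(bs/n)^{s-1}$ — here I'd pull out one factor of $(bs/n)$ to absorb the $4n$ (note $(bs/n)^{-1} \le n/(2b)$, and we have $4n$ to dominate, fine with $b=100$), and $5^s \le b^{s/2} \cdot$ (stuff), etc. The bookkeeping to land exactly on $b = d = 100$ is the fussy part but it is just choosing constants generously.

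I expect the main obstacle to be organizing the arc estimate for $I_{n,s}$ so that a \emph{single} clean bound of the form $d(bs/n)^{\mu/2}$ holds uniformly over the stated ranges $n \ge 4$, $2 \le s \le n/2$, $\mu \le s$, since — as the analysis above shows — the quantity $(bs/n)^{\mu/2}$ changes character (genuinely small vs.\ larger than $1$) depending on whether $s/n$ is small, and one must verify the doubly-exponential prefactor $n!/(2n)^{n+1}$ times $2^{2n}4^\mu$ beats it in every regime; the honest comparison is between $4^\mu \le (100 s/n)^{\mu/2}$ (valid once $s \ge n/6$) and, for smaller $s$, using that $n!/(2n)^{n+1}\cdot 2^{2n} \le (e/2)^n n^{-1}\cdot 2^{-n}\cdot(\text{poly}) = (e/4)^n(\text{poly})$ is small enough to compensate $4^\mu$ with $\mu \le s \le n/6$. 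I would structure the write-up as: (1) the uniform lower bounds $|z-q| \ge \sqrt{s^2+q^2}$ on $\mathscr{C}_{n,s}$ and the sharper $|z-q|\ge 2n$ on $\Omega_{n,s}$; (2) the arc bound for $I_{n,s}$ with the two-regime check; (3) the segment bound for $J_{n,s}$ via the binomial coefficient $\binom{n+s}{s}$; (4) a final paragraph fixing $b = d = 100$ and confirming every inequality. The constants $6$, $2$ appearing in Lemma~\ref{G} and the ambient statement suggest these generous choices are exactly what is intended.
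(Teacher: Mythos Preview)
Your proposal contains a fatal geometric error: you have the arc $\Omega_{n,s}$ backwards. Going counter-clockwise on the circle $|z|=2n$ from $-s-i\sqrt{4n^2-s^2}$ to $-s+i\sqrt{4n^2-s^2}$ passes through $z=2n$, not $z=-2n$; the arc is the part of the circle with $\mathrm{Re}(z)\ge -s$, not $\le -s$. (The paper's proof confirms this: it parametrizes $\Omega_{n,s}$ by $z=2ne^{i\theta}$ with $|\theta|\le d_n<2$.) Consequently your key inequalities on the arc collapse. In particular, $|z-q|\ge 2n$ is false (at $z=2n$ one has $|z-n|=n$), and the ``doubly-exponentially small'' prefactor is an illusion: at $\theta=0$,
\[
\frac{n!\,2^{2n}}{\prod_{k=0}^n|z-k|}\ =\ \frac{n!\,2^{2n}}{(2n)(2n-1)\cdots n}\ =\ \frac{(n!)^2 2^{2n}}{(2n)!}\ \sim\ \sqrt{\pi n},
\]
so the integrand is of order $\sqrt{n}$ there, not tiny. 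There is no regime-splitting that rescues this; the whole mechanism for $I_{n,s}$ must be rebuilt.

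The actual source of the gain $(bs/n)^{\mu/2}$ on the arc, as in the paper (following Langley), is the interplay between two $\theta$-dependent effects near $z=2n$: first $|z-2n|\le 2n|\theta|$ makes the extra factor $|(z-2n)/n|^\mu\le 2^\mu|\theta|^\mu$ \emph{small} for small $\theta$; second, the product $\prod_{k=0}^n|z-k|$ exceeds $\prod_{k=0}^n(2n-k)$ by a Gaussian factor $\exp(c\,n\theta^2)$. After Stirling cancels $n!2^{2n}$ against $\prod(2n-k)$ up to $O(\sqrt n)$, one is left with $\int|\theta|^\mu e^{-c n\theta^2}\,d\theta$, which a substitution and the Gamma function turn into a constant times $(C\mu/n)^{\mu/2}\le (Cs/n)^{\mu/2}$. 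Your $J_{n,s}$ sketch on the segment $T_{n,s}$ is in the right spirit (there $\mathrm{Re}(z)=-s$ really does hold and $|z-q|\ge q+s$), though the paper uses a sharper $y$-dependent bound from Langley to control the integral; your cruder uniform bound plus segment length $\le 4n$ may still close with $b=d=100$, but the arc part needs to be redone from scratch.
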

\begin{proof}
Fix $n\ge 4$ and $s$ with $2\le s\le n/2$. We follow the proof of  \cite[Lemma 5]{Langley} with the aim of getting more explicit estimates.

First consider $I_{n,s}$. The arc  $\Omega_{n,s}$ is parameterized by $z=2ne^{i \theta}$, $-d_n\le \theta \le d_n$, where $d_n<2$ since $s\le n/2$. So $1-\cos(\theta)\ge \frac{1}{3}\theta^2$ for all $\theta$ satisfying $-d_n\le \theta \le d_n$. This gives for all $z\in \Omega_{n,s}$ and $0\le k\le n$  that
$$|z-k|^2\ =\ (2n-k)^2 + 4nk(1-\cos\theta)\ \ge\ (2n-k)^2 \exp\left(\frac{nk\theta^2}{3(2n-k)^2}\right).$$
It follows for all $z\in \Omega_{n,s}$ that
\begin{equation}\label{2}
\prod_{k=0}^n|z-k|\ \ge\ \exp(n\theta^2/36)\cdot\prod_{k=0}^n(2n-k).
\end{equation}
Also, we have for $z\in \Omega_{n,s}$ that
\begin{equation}\label{3}
|z-2n|\ \le 2n|\theta|, \qquad \text{and}\qquad \left|\frac{z-2n}{n}\right|^{\mu}\le 2^{\mu}|\theta|^{\mu}.  
\end{equation}
Using (\ref{2}), (\ref{3}), Stirling's approximation, and a substitution $x=\sqrt{n}\theta/6$, we get that
\begin{align*}
I_{n,s}\ & \le \ 2^{\mu} \cdot \frac{(2n)n!2^{2n}}{2n(2n-1)\ldots(2n-n)}\cdot2\int_{0}^{\pi} \theta^{\mu} \exp(-n\theta^2/36)\ d\theta\\
& \le \ 2\cdot 2^{\mu}\cdot4\sqrt{n}\cdot (36/n)^{\mu/2+1/2} \cdot \int_{0}^{\infty} x^{\mu} \exp(-x^2)\ dx\\
& \le \ 48\cdot  2^{\mu}\cdot (36/n)^{\mu/2}\cdot (\mu/2)^{\mu/2}\\
& \le \ 48\cdot (72\mu/n)^{\mu/2}.
\end{align*}
Here we used that $\int_{0}^{\infty} x^{\mu} \exp(-x^2)\ dx=\Gamma(\mu/2+1)/2\le (\mu/2)^{\mu/2}$.

This finishes the proof for the first part of the lemma and we turn our attention to $J_{n,s}$. For $z\in T_{n,s}$, we have $z=-s+iy$ for $-\sqrt{4n^2-s^2}\le y\le \sqrt{4n^2-s^2}$, and 
$$ |z-k|^2= (s+k)^2+y^2$$
for  $0\le k\le n$. It follows that
$$J_{n,s}\ \le \ 2\cdot4^{\mu}\cdot\int_{0}^{\infty} \frac{n!}{\prod_{k=0}^n \sqrt{(s+k)^2 +y^2}}\ dy.$$
Next we observe that for $D$ in \cite[Equation 24]{Langley}, we have that $D\le 2s$ and this gives
\begin{align*}
\frac{n!}{\prod_{k=0}^n \sqrt{(s+k)^2 +y^2}}\ & \le \ \frac{3n^{n+1/2}e^{-n}(s^2+y^2)^{(s-1)/2}e^{n}e^{s}}{\big((n+s)^2+y^2\big)^{(n+s)/2}\exp(y\arctan\big((n+s)/y))} \\
& \le \ 3^{s+1}\cdot n^{n+1/2}n^{-(n+s)}\cdot(s^2+y^2)^{(s-1)/2}e^{-y/4},
\end{align*}
using the simple inequality $\arctan(u)\ge 1/4$ for all $u\ge 1/2$. Now  $\mu\le s$ and $n\ge 4$ give us that
$$J_{n,s}\ \le \ 2\cdot4^s\cdot 3^{s+1}\cdot (n^{-s+1}/2)\cdot \int_0^{\infty} (s^2+y^2)^{(s-1)/2}e^{-y/4}\ dy. $$
Splitting the integral gives
\begin{align*}
\int_0^{\infty} (s^2+y^2)^{(s-1)/2}e^{-y/4}\ dy\ & \le\ 2^{s-1}\left[\int_{0}^{\infty} s^{(s-1)}e^{-y/4}\ dy + \int_{0}^{\infty} y^{(s-1)}e^{-y/4}\ dy\right]\\
    & \le\ 2^{s-1} [4\cdot s^{s-1}\ +\ (4s)^{s-1} ]
\end{align*}
Finally using $s\ge 2$ we arrive at 
$$ J_{n,s}\ \le \ 3^2\cdot 4\cdot 2\cdot (96s/n)^{s-1}. $$
The proof is complete.
\end{proof}

\subsection*{Fixing $K$} Throughout the rest of this section we set $K\coloneq10^5$. Hence $K=10^3b$,
where $b$ is the positive integer given by Lemma~\ref{integral}.

\begin{proposition}\label{bounds}
Let $g:\H(- 1)\to \C$ be analytic and suppose we have  for all $z\in \H(-1)$ that
$$|g(z)|\le |2^{z+2|z|/K}|.$$
Then there is  $C_{\star}<1$ such that for all sufficiently large $a$, we have that
$$|\Delta^{n-a}(\Delta-1)^a g(a)|\ <\ C_{\star}^{a}$$
for all $n$ satisfying  $Ka \le n\le K(a+1)$.
\end{proposition}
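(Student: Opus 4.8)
\noindent
The plan is to follow Langley's strategy: represent $\Delta^{n-a}(\Delta-1)^{a}g(a)$ as a contour integral against an explicit rational kernel, and then estimate that integral over a contour of the type furnished by Lemma~\ref{integral}. Writing $\Delta=E-1$ and $\Delta-1=E-2$ for the shift operator $E$, the quantity $\Delta^{n-a}(\Delta-1)^{a}g(a)$ is a fixed $\R$-linear combination of $g(a),g(a+1),\dots,g(a+n)$; replacing each value by a Cauchy integral and collecting residues gives
$$\Delta^{n-a}(\Delta-1)^{a}g(a)\;=\;\frac{1}{2\pi i}\oint_{\Gamma}g(z)\,\frac{R(z)}{(z-a)(z-a-1)\cdots(z-a-n)}\,dz ,$$
valid for any simple closed $\Gamma\subset \H(-1)$ encircling $a,a+1,\dots,a+n$ once, where $R$ is an explicit polynomial of degree at most $a$ (after the translation $z\mapsto z+a$ it is a multiple of $\sum_{k\le a}(-1)^{a-k}\binom{a}{k}(n-a+k)!\prod_{n+k<i\le n+a}(z-i)$, so the translated kernel has its poles at $0,1,\dots,n$).

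\noindent
The second ingredient is to identify $R(z+a)/n!$, re-expanded as a polynomial in the two affine functions $x=(z-2n)/n$ and $y=1/n$, with Langley's $\Delta^{a}G(0,x,y)$ of Lemma~\ref{G}. Lemma~\ref{G} then supplies both the vanishing $A_{\mu,\nu}=0$ whenever $\mu+2\nu<a$ — which is exactly what forces every surviving monomial of $R$ to carry enough smallness — and the crude size bound $|A_{\mu,\nu}|\le 6^{a}a^{\nu}$, so that on the contour $|R(z+a)|\le n!\sum_{\mu+2\nu\ge a}6^{a}a^{\nu}\,|z-2n|^{\mu}n^{-\mu-\nu}$. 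For the contour I would take $\mathscr{C}_{n,s}$ of Lemma~\ref{integral} translated to the right by $a$, with $s=a$: since $K=10^{5}$ gives $n\ge Ka\ge 2a$ the hypotheses $2\le s\le n/2$ hold, the vertical segment then lies on $\{\mathrm{Re}(z)=0\}\subset\H(-1)$, the enclosed region contains $a,\dots,a+n$, and after $z\mapsto z+a$ the denominator $z(z-1)\cdots(z-n)$ is exactly the weight $|z|\,|z-1|\cdots|z-n|$ appearing in $I_{n,s}$ and $J_{n,s}$.

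\noindent
Then split $\oint_{\mathscr{C}_{n,a}}=\int_{\Omega_{n,a}}+\int_{T_{n,a}}$. On the arc I would use $|g(z)|\le 2^{\mathrm{Re}(z)+2|z|/K}$ together with $\mathrm{Re}(z-a)\le|z-a|=2n$, $|z|\le 2n+a$, and $n\le K(a+1)$ to factor off a prefactor that is a fixed constant to the power $a$, with the $2^{|z-a|}=2^{2n}$ absorbed into the $2^{|z|}$-weight of $I_{n,s}$; feeding the expansion of $R$ into Lemma~\ref{integral} term by term then bounds the $(\mu,\nu)$-term of the arc integral by a multiple of $6^{a}a^{\nu}n^{-\nu}(ba/n)^{\mu/2}\le 6^{a}(a/n)^{\nu}(b/K)^{\mu/2}$, and the constraint $\mu+2\nu\ge a$ makes this a fixed constant (depending only on $b$ and $K$) to the power $a$. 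On the segment $T_{n,a}$, where $\mathrm{Re}(z)=0$, the $2^{|z|}$-growth of $g$ has disappeared and I would instead use the much stronger bound $J_{n,s}<d(ba/n)^{s-1}=d(ba/n)^{a-1}$, which on its own beats any single prefactor exponential in $a$. Summing the resulting geometric-type series over $\mu,\nu$ yields $|\Delta^{n-a}(\Delta-1)^{a}g(a)|\le \mathrm{poly}(a)\cdot C^{a}$ for a fixed $C$, uniformly over $Ka\le n\le K(a+1)$, and one reads off that the choice $K=10^{5}=10^{3}b$ makes $C<1$, hence $<C_{\star}^{a}$ for any fixed $C_{\star}\in(C,1)$ and all large $a$.

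\noindent
The hard part is exactly the numerical bookkeeping in the last two steps: one must simultaneously dominate the $6^{a}$ coming from Lemma~\ref{G}, the single-exponential-in-$a$ growth of $g$ on the outer arc (this is where the $2/K$ of slack in the hypothesis is consumed, and where too small a $K$ would destroy the argument), and the Stirling constants buried in Lemma~\ref{integral}, all against the decay $(bs/n)^{\mu/2}$ on the arc and $(bs/n)^{s-1}$ on the segment — this is precisely why $K$ is taken as large as $10^{5}$ and why the explicit reworking of Langley's Lemmas~5--6 into our Lemmas~\ref{integral} and \ref{G}, with their concrete constants $b=d=100$ and $6^{a}a^{\nu}$, is needed. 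A smaller, routine but necessary point is to check that the translated contour really stays inside the half-plane $\H(-1)$ on which $g$ is defined, and that the kernel's denominator matches the weight $|z|\cdots|z-n|$ of Lemma~\ref{integral} after the translation.
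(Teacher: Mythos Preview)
Your proposal is correct and follows essentially the same route as the paper: translate so that the nodes sit at $0,\dots,n$ (the paper translates $g$ to $g_a(z)=g(a+z)$, you translate the contour --- equivalent), identify the kernel with Langley's $\Delta^{a}G(0,x,y)$, invoke Lemma~\ref{G} for the vanishing $A_{\mu,\nu}=0$ when $\mu+2\nu<a$ and the size bound $6^{a}a^{\nu}$, split $\mathscr{C}_{n,a}$ into $\Omega_{n,a}\cup T_{n,a}$ with $s=a$, and feed the pieces through Lemma~\ref{integral}. One small point you gloss over: the exact identity coming out of Langley's equation (31) carries an extra factor $\prod_{j=0}^{a-1}(1-j/n)$ on the left, so your identification of $R(z+a)/n!$ with $\Delta^{a}G(0,x,y)$ is off by that normalization; the paper absorbs it via $(1-a/n)^{a}\ge(1-1/K)^{a}$, which fits seamlessly into your ``numerical bookkeeping'' step.
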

\begin{proof} 
Suppose we are given an analytic function $g:\H(-1)\to \C$ such that
$$|g(z)|\le |2^{z+2|z|/K}|$$
for all $z\in \H(-1)$. 
For all positive integers $a$, we consider the translations $g_a$ of $g$ given by complex analytic functions
$$g_a(z)\coloneq g(a+z)\ :\ \H(-a-1)\to \C.$$
So we have for all $a$ that $|g_a(z)|\le 2^{(1+2/K)a}\cdot |2^{z+2|z|/K}|$, and that $g_a$ is analytic on an open set containing the contour $\mathscr{C}_{n,a}$ from Lemma~\ref{integral}. Fix an $a\ge 4$, and an $n$ such that  $Ka \le n\le K(a+1)$.
We will obtain small asymptotic upper bounds for 
$$D_{a,n}\ =\ |\Delta^{n-a}(\Delta-1)^a g(a)|\ =\ |\Delta^{n-a}(\Delta-1)^a g_a(0)|.$$
Following \cite[Section 4]{Langley}, equation (31) there together with Lemma~\ref{G} gives
$$D_{a,n}\cdot\prod_{j=0}^{a-1}(1-j/n)\ \le\ \sum_{\substack{\mu\le a,\\ \nu<a}} |A_{\mu,\nu}| n^{-\nu} \int_{\mathscr{C}_{n,a}} \frac{n!|g_a(z)|}{|z|\ldots|z-n|} \left|\frac{z-2n}{n}\right|^{\mu} |dz|.$$
So we have for all $\mu\le a$ and $\nu<a$  that $$ A_{\mu,\nu}=0\quad  \text{if}\quad \mu +2\nu<a \qquad \text{and}\qquad |A_{\mu,\nu}|\le 6^a a^{\nu}.$$ We have that $|z|\le 2n$ for all $z\in \mathscr{C}_{n,a}$. Now using $a\ge 4$ and $n\le K(a+1)$, we note that $|g_a(z)|\le 2^{7a}2^{|z|} $ for all $z\in \Omega_{n,a}$, and $|g_a(z)|\le 2^{6a} $ for all $z\in T_{n,a}$. Combining these observations with an application of Lemma~\ref{integral} for $s=a$ we get 
\begin{align*}
D_{a,n}\cdot(1-a/n)^{a}\ & \le \ 2^{7a}6^{a}\sum_{\substack{\mu\le a,\\ \nu<a,\\\mu+2\nu\ge a}}  (a/n)^{\nu}\cdot( I_{n,a}+J_{n,a}) \\
& \le\  768^a d \sum_{\substack{\mu\le a,\\ \nu<a,\\\mu+2\nu\ge a}}  (a/n)^{\nu} (ba/n)^{\mu/2} + (a/n)^{\nu} (ba/n)^{a-1},
\end{align*}
where $b, d$ are the constants from Lemma~\ref{integral}. Now using $a/n\le 1/K <1$, we get
\begin{align*}
D_{a,n}\ & \le\ (1-1/K)^{-a}\cdot 768^a\cdot d \cdot a^2 \cdot (b/K)^{a-1}\\
 &\le \ c\cdot \big(800b/(K-1)\big)^{a-1},
\end{align*}
for a constant $c$. Since  $K=1000b$, we get for all $a\ge 4$ that
$$D_{a,n}\ \le\ c\cdot (6/5)^{-a+1}$$
for all $n$ satisfying $Ka\le n\le K(a+1)$, and the proof is complete.
\end{proof}

\noindent
With modest modifications, one can obtain variants of Lemmas~\ref{integral} and \ref{bounds} which can be used together with Lemma~\ref{inter}  to slightly liberalize the growth condition in \cite[Theorem 2]{Langley} (and hence also in Theorem~\ref{P}). We do not pursue this relatively minor increment here, and record it merely as an observation as follows.

\begin{remark}\label{bL}
There is an $\epsilon_0$ such that the following holds. Let $g:\H(r)\to \C$ be analytic for some $r$, and that $g(a)\in \Z$ for all positive integers $a$. Suppose we have  for all $z\in \H(r)$ that
$|g(z)|= O(2^{(1+\epsilon_0)|z|})$. Then there are polynomials $P_1, P_2$ such that $g(z)=P_1(z)+P_2(z)2^z$ for all $z\in \H(r)$. 
\end{remark}

\noindent
We are ready to finish the proof of our first theorem.

\begin{proof}[Proof of Theorem \ref{main}]
Let $f:\R\to \R $ be as in the statement of the theorem, and let $c, B_0$ be such that $f(a)\in \Z$ and $f(x)\le c2^{(1+10^{-5})x}= c2^{(1+1/K)x}$ for all $a\ge B_0$. Apply Proposition~\ref{3.1} for $C=2^{(1+1/K)}$, $\epsilon=1/K$, and $R=K$ to obtain $B_1\ge B_0$ and an analytic function $g:\H(B_1)\to \C$ with \begin{enumerate}
\item[$\rm(i)$] $|f(x)-g(x)|< e^{-Kx}$ for all $x > B_1$,
\item[$\rm(ii)$]   $|g(z)|\le |2^{z+2|z|/K}|$ for all $z\in \H(B_1)$.
\end{enumerate}
Then applying Proposition~\ref{bounds} to an appropriate translate of $g$, we get $B_2> B_1$ such that for all $a\ge B_2$,
$$|\Delta^{n-a}(\Delta-1)^a g(a)|<1/2$$
for all $n$ satisfying  $Ka \le n\le K(a+1)$.
Then by the linearity of $\Delta$ and Lemma~\ref{error}, we obtain $B_3\ge B_2$ such that for all $a\ge B_3$ and $n$ satisfying $Ka\le n\le K(a+1)$ we have that
$$|\Delta^{n-a}(\Delta-1)^a f(a)|\ <\  1, \qquad \text{and hence} \qquad |\Delta^{n-a}(\Delta-1)^a f(a)|=0.$$
We use here the simple observation that since $f(a)\in\Z$ for all $a \ge B_0$, also $\Delta^{n-a}(\Delta-1)^a f(a)\in \Z$ for all $a \ge B_0$. The theorem now follows immediately by Corollary~\ref{intero}.
\end{proof}

\subsection*{Connection to a conjecture of Wilkie} 
We want to draw attention to the following broad conjecture made at the conclusion of \cite{Wilkie}.

\begin{conj}\label{conj: Wilkie}
Let $f:\R\rightarrow \R$ be definable in $\Ranexp$. Suppose that $f(a)\in \Z$ for all sufficiently large positive integers $a$ and there is $r>0$ such that $|f(x)| < \exp(rx)$ for all sufficiently large $x$. Then there is a polynomial $P(x,y_1,\ldots, y_m)$ with rational coefficients, and positive real algebraic integers $\alpha_1,\ldots,\alpha_m$ such that $$f(x) = P(x,\alpha_1^x,\ldots,\alpha_m^x)$$
for all sufficiently large $x$.
\end{conj}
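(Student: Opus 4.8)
\medskip\noindent
Conjecture~\ref{conj: Wilkie} is open, so what follows is a plan of attack: it records which parts of the present machinery transfer and isolates the one part that is genuinely missing. The plan is to follow the three-stage template behind Theorem~\ref{main} --- replace the $\Ranexp$-definable $f$ by a complex-analytic companion $g$ on a right half-plane; classify $g$ by purely complex-analytic means; transfer the conclusion back to $f$ on $\R$ via o-minimal rigidity.

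Stage one is immediate: given $|f(x)| < \exp(rx)$, apply Proposition~\ref{3.1} with $C = e^{r}$, a small $\epsilon$, and an $R$ to be fixed large relative to $r$, obtaining $B$ and an analytic $g : \H(B) \to \C$ with $|f(x) - g(x)| < e^{-Rx}$ for $x > B$ and $|g(z)| \le e^{r(1+\epsilon)|z|}$ for $z \in \H(B)$; so $g$ has finite exponential type on the half-plane and, since $f(a) \in \Z$, the value $g(a)$ lies within $e^{-Ra}$ of $\Z$ for every large integer $a$. Stage three is routine once stage two is available. If one knows $g(z) = \sum_{i=1}^{m} P_i(z)\alpha_i^{z} + H(z)$ on $\H(B)$ with $P_i$ polynomials, the $\alpha_i > 0$ distinct, and $|H(x)| \le \exp(-Rx)$ for large real $x$, then a suitable iterate of the operators $\Delta - (\alpha_i - 1)$ --- the natural generalization of the $\Delta - 1$ operator used here, since $(\Delta - (\alpha_i - 1))\alpha_i^{a} = 0$ --- annihilates the exponential-polynomial part, so a Lemma~\ref{error}-type estimate forces $f(a) - \sum_i P_i(a)\alpha_i^{a} \to 0$; an o-minimality argument then upgrades this to exact vanishing for large integers $a$ (an $\Ranexp$-definable function cannot tend to $0$ while changing sign infinitely often --- this already rules out negative or complex $\alpha_i$ --- and for positive real bases Fact~\ref{basico} gives equality of functions for all large real $x$). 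Rationality of the coefficients of the resulting $P(x, \alpha_1^x, \dots, \alpha_m^x)$ comes from integrality of $f$ via a Galois argument (the coefficients attached to Galois-conjugate bases are conjugate, so grouping conjugate bases produces a polynomial over $\Q$), just as the rationality of $P_1, P_2$ in Lemma~\ref{inter} came from integrality; and the $\alpha_i$ are algebraic integers because the integer sequence $a \mapsto f(a)$ (which here satisfies an exact linear recurrence) generates a full-rank shift-invariant sublattice of $\Z^{k}$, with $k$ the order of the recurrence, on which the shift acts by an integer matrix, so its characteristic polynomial --- whose roots are the $\alpha_i$ --- is monic over $\Z$.

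The crux, and the reason this is a conjecture rather than a theorem, is stage two: proving that a function $g$ analytic on a half-plane, of arbitrary finite exponential type $\tau$, with $g(a)$ within $e^{-Ra}$ of $\Z$ for all large $a$ and $R$ large relative to $\tau$, has the exponential-polynomial-plus-negligible-error form above. For \emph{entire} functions this is the circle of results of Buck~\cite{Buck} and Yoshino~\cite{Yoshino}; what is needed is a half-plane analog, in the spirit in which Langley~\cite{Langley} pushed the P\'olya--Selberg theorem from entire functions to $\H(B)$ and in which \S\ref{IV} pushes it one notch further. Two genuinely new difficulties stand out. First, in \S\ref{IV} the growth hypothesis pins the only possible bases to $1$ and $2$, so no base-finding or arithmetic is needed; in general the bases, and their number, are unknown a priori and must be extracted dynamically --- e.g.\ via a Kronecker-type determinant argument on an associated divided-difference sequence of $g$ (the relevant determinants being forced to be small integers, hence zero, which produces the recurrence), or via the singularities of a Borel transform of $g$. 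Second, Lemmas~\ref{G}, \ref{integral}, and \ref{bounds} are tailored to the base $2$; one needs versions with $2$ replaced by a general base $\alpha$, run inductively on the exponential type, peeling off a maximal-modulus exponential term at each stage and controlling the type and the near-integrality of the remainder. It is exactly this inductive half-plane Buck--Yoshino theorem that is missing, and Theorem~\ref{main} should be seen as settling its first non-trivial case.
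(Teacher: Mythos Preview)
The paper does not prove this statement: it is explicitly a \emph{conjecture}, and the only discussion the paper provides after stating it is the remark that Wilkie's theorem settles the case $r<\ln 2$ and that Theorem~\ref{main} extends this to $r\le \ln(2)(1+10^{-5})$. You correctly identify the statement as open and present a research outline rather than a proof, so there is no paper-proof to compare against.

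As a plan, your three-stage template is the natural one and is consistent with how the paper handles the special case in \S\ref{IV}. Your identification of Stage~2 --- a half-plane analogue of the Buck--Yoshino classification --- as the missing ingredient is exactly right, and your comments on why the \S\ref{IV} machinery does not immediately generalize (unknown bases, operators tailored to base~$2$) are apt.

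One caution about your Stage~3 sketch: the passage from ``$f(a)-\sum_i P_i(a)\alpha_i^{a}\to 0$'' to ``exact vanishing for large $a$'' is not delivered by o-minimality alone. A definable function can tend to $0$ without ever being $0$ (e.g.\ $e^{-x}$), and Fact~\ref{basico} requires equality at integers as \emph{input}, not as output. In the proof of Theorem~\ref{main} the crucial step is that $\Delta^{n-a}(\Delta-1)^a f(a)$ is an \emph{integer}, so ``small'' forces ``zero''; for general bases $\alpha_i$ the analogous iterated difference $\prod_i(\Delta-(\alpha_i-1))^{d_i}f(a)$ has non-integer coefficients and need not be an integer. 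The integrality you need is more naturally packaged in the linear-recurrence formulation you mention later (the shift acts by an integer matrix), but then establishing that recurrence is already the content of Stage~2, not a separate Stage~3 step. So Stages~2 and~3 are more entangled than your outline suggests.
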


\noindent
Wilkie's result \cite[Theorem 5.1]{Wilkie} settles the case of $r<\ln(2)$, and Theorem~\ref{main} upgrades the classification to all $r\le \ln(2)(1+10^{-5})$. Here we remark also that our choice of $K$ earlier in the section is not optimal, and hence micro-improvements in this exponent $\ln(2)(1+10^{-5})$ may be easily achieved.

\section{Concordant integer-valued functions definable in $\Ranexp$}\label{CIV}

\noindent
We begin with a basic Corollary~\ref{intero} type result.

\begin{fact}\label{Deltao}
Let $f:\R\to\R$ be given. 
\begin{itemize}
\item[$\rm(i)$] Suppose there is $B$ such that $\Delta^nf(B)=0$ for all sufficiently large $n$. Then there is a polynomial $P$ such that $f(a)=P(a)$ for all $a\ge B$. 
\item[$\rm(ii)$] If moreover, $f(a)\in \Z$ for all $a\ge B$, then we get that $P\in \Q[X]$.
\item[$\rm(iii)$] If furthermore, $f$ is definable in $\Ranexp$, then $f(x)=P(x)$ for all large $x$.
\end{itemize}
\end{fact}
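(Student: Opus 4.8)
The plan is to prove the three parts in sequence, each building on the previous one, and in each case reducing to a statement about finite differences of sequences.

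For part (i), fix $B$ and suppose $\Delta^n f(B) = 0$ for all $n \ge N$, say. The key observation is the classical identity, obtained from (\ref{eq: Delta^n}) by inversion, that for any $m \ge 0$ one can write $f(B+m)$ as a combination $f(B+m) = \sum_{j=0}^{m} \binom{m}{j} \Delta^j f(B)$. Hence if $\Delta^j f(B) = 0$ for all $j \ge N$, we get $f(B+m) = \sum_{j=0}^{N-1} \binom{m}{j} \Delta^j f(B)$ for every $m \ge 0$, and the right-hand side is a polynomial in $m$ of degree at most $N-1$ with constant coefficients. Composing with $m = a - B$ exhibits a polynomial $P$ of degree at most $N-1$ with $f(a) = P(a)$ for all $a \ge B$. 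I would state this inversion identity explicitly and verify it either by a short induction on $m$ or by citing it as the standard Newton forward-difference formula; this is routine and parallels the role played by Lemma~\ref{inter} in the $(\Delta-1)$-setting.

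For part (ii), suppose additionally $f(a) \in \Z$ for all $a \ge B$. Then each $\Delta^j f(B)$ is an integer combination of the values $f(B), f(B+1), \dots, f(B+j)$ by (\ref{eq: Delta^n}), hence an integer; so the coefficients of $P$ expressed in the binomial basis $\binom{X-B}{j}$ are integers, and expanding into the monomial basis gives $P \in \Q[X]$. This mirrors exactly the last line of the proof of Lemma~\ref{inter}.

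For part (iii), assume moreover that $f$ is definable in $\Ranexp$. By parts (i) and (ii) there is $P \in \Q[X]$ (definable, being a polynomial) with $f(a) = P(a)$ for all sufficiently large $a$. Fact~\ref{basico} then immediately yields $f(x) = P(x)$ for all sufficiently large $x$. This is the same pattern by which Corollary~\ref{intero} is deduced from Lemma~\ref{inter} and Fact~\ref{basico}. I do not anticipate a genuine obstacle here: the only mildly delicate point is being careful that "for all sufficiently large $n$" in the hypothesis gives a fixed finite $N$, so that the resulting $P$ has bounded degree and the Newton expansion terminates — but this is immediate from the definition of "sufficiently large."

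\begin{proof}
For (i), fix $B$ and let $N$ be such that $\Delta^n f(B) = 0$ for all $n \ge N$. By inverting (\ref{eq: Delta^n}), or equivalently by the Newton forward-difference formula, we have for every $m \ge 0$ that
$$f(B+m) \ = \ \sum_{j=0}^{m} \frac{m!}{j!(m-j)!}\, \Delta^j f(B) \ = \ \sum_{j=0}^{N-1} \frac{m!}{j!(m-j)!}\, \Delta^j f(B),$$
where the second equality uses that $\Delta^j f(B) = 0$ for $j \ge N$ and the convention that $\binom{m}{j} = 0$ for $j > m$. The right-hand side is a polynomial in $m$ of degree at most $N-1$; substituting $m = a - B$ gives a polynomial $P$ with $f(a) = P(a)$ for all $a \ge B$.

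For (ii), suppose in addition that $f(a) \in \Z$ for all $a \ge B$. Then (\ref{eq: Delta^n}) shows that each $\Delta^j f(B)$ is an integer linear combination of $f(B), f(B+1), \dots, f(B+j)$, hence $\Delta^j f(B) \in \Z$. Thus $P$, written in the basis $\binom{X-B}{j}$, has integer coefficients, and so $P \in \Q[X]$.

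For (iii), suppose furthermore that $f$ is definable in $\Ranexp$. By (i) and (ii) there is $P \in \Q[X]$ with $f(a) = P(a)$ for all sufficiently large $a$. Since $P$ is a polynomial, it is definable in $\Ranexp$, so Fact~\ref{basico} gives $f(x) = P(x)$ for all sufficiently large $x$.
\end{proof}
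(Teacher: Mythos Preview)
Your proof is correct. Parts~(ii) and~(iii) match the paper's argument essentially verbatim.

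For part~(i) you take a different but equally valid route. The paper argues by interpolation and induction: it picks a polynomial $P$ of degree at most $N-1$ agreeing with $f$ on $B,B+1,\dots,B+N-1$, then uses $\Delta^N f(B)=\Delta^N P(B)=0$ to deduce $f(B+N)=P(B+N)$, and iterates. You instead invoke the Newton forward-difference expansion $f(B+m)=\sum_{j}\binom{m}{j}\Delta^j f(B)$ directly, which truncates to a polynomial once the higher differences vanish. Your approach has the advantage of giving an explicit formula for $P$ in the binomial basis (which then feeds cleanly into your argument for~(ii)); the paper's approach has the advantage of paralleling its own proof of Lemma~\ref{inter}, keeping the exposition uniform. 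Either is perfectly standard for this classical fact.
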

\begin{proof}
Item $\rm(i)$ is a classical fact observed by P\'olya, and we sketch a  proof as follows. Let $N$ be such that $\Delta^nf(B)=0$ for all $n\ge N$. Let $P$ be a polynomial of degree at most $N-1$ such that $f(a)=P(a)$ for all $B\le m\le B+N-1$. Then $\Delta^NP(B)=0$, and using $\Delta^Nf(B)=0$, we get that $f(B+N)=P(B+N)$. Proceeding in the same manner, we get that $f(a)=P(a)$ for $a\ge B$, as desired.

If moreover $f(a)\in \Z$ for all $a\ge B$, then the polynomial $P$ above has rational coefficients; and $\rm(iii)$ follows by direct application of Fact~\ref{basico}. 
\end{proof}

\noindent
We shall use the following purely combinatorial fact; the statement is perhaps of some independent interest as well.
\begin{lemma}\label{cmain} 
Let $k\ge 1$ and $p$ be given. For all $\ell\in \N$ we have that
$$\sum_{j = 0}^{k}(-1)^{jp}\binom{kp}{jp}(jp)^{\ell}\ \equiv\ 0 \mod p^k,$$
and also for any $i\in \{1,\ldots, p-1\}$ that 
$$\sum_{j = 0}^{k - 1}(-1)^{jp}\binom{kp}{jp+i}(jp+i)^\ell\ \equiv\ 0 \mod p^{k}.$$
\end{lemma}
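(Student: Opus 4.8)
The plan is to prove both congruences by interpreting the alternating binomial sums as iterated finite differences and extracting $p$-divisibility from the prime-power factors that appear in the relevant factorials, roughly as in the classical treatment of integer-valued polynomials.

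\textbf{Setup via finite differences.} First I would recognize that, up to the sign convention $(-1)^{jp} = ((-1)^p)^j$, the sum $\sum_{j=0}^k (-1)^{jp}\binom{kp}{jp}(jp)^\ell$ is (a sub-sum of) the expansion of $\Delta^{kp}$ applied to a suitable function, evaluated at a point; compare \eqref{eq: Delta^n}. More precisely, with $F(x) = x^\ell$ one has $\Delta^{kp}F(0) = \sum_{t=0}^{kp}(-1)^{kp-t}\binom{kp}{t}t^\ell$, and the two sums in the lemma arise by splitting the index $t$ into residue classes mod $p$: writing $t = jp+i$ for $i \in \{0,1,\dots,p-1\}$, the $i=0$ part gives the first sum and each $i \neq 0$ part gives the second. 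So the lemma is really the assertion that \emph{each residue-class piece} of $\Delta^{kp}(x^\ell)(0)$ is divisible by $p^k$. I would make this splitting precise and reduce to showing, for each fixed $i \in \{0,\dots,p-1\}$, that $\sum_{j}(-1)^{jp}\binom{kp}{jp+i}(jp+i)^\ell \equiv 0 \bmod p^k$ (with the range of $j$ as dictated by $0 \le jp+i \le kp$).

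\textbf{Extracting $p^k$.} The key arithmetic input is a lower bound on $v_p\!\left(\binom{kp}{jp+i}\right)$. By Kummer's theorem, $v_p\binom{kp}{jp+i}$ equals the number of carries when adding $jp+i$ and $kp-jp-i$ in base $p$. When $1 \le i \le p-1$, the base-$p$ expansions of $jp+i$ and $(k-j)p - i$ both have nonzero units digit summing to $p$ (a carry), and one checks this forces enough additional carries that $v_p\binom{kp}{jp+i} \ge $ something close to $k$ — but this naive bound is typically only $1$, so it is \emph{not} enough on its own. Instead I would argue the way one proves $\Delta^{m}$ of a polynomial of degree $<m$ vanishes: the genuine mechanism is that $x(x-1)\cdots(x-m+1)/m!$ is integer-valued, so $\binom{kp}{t}$ times a polynomial in $t$ of controlled degree, summed over $t$, inherits divisibility from $\binom{kp}{t}$ only after one accounts for cancellation. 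The cleaner route: expand $(jp+i)^\ell$ in the basis of falling factorials $\binom{jp+i+\text{shift}}{\cdot}$ or, better, use the operator identity that $\sum_j (-1)^{jp}\binom{kp}{jp+i} q(jp+i)$ for any polynomial $q$ can be rewritten using the substitution $y = $ (the variable counting $j$) as $p^k$ times an integer, because $\prod_{r=0}^{k-1}(x - rp) \equiv x^k \cdot(\text{unit})$ has all roots spaced by $p$ and $k!$ of the $p$'s get absorbed. Concretely, I expect the right tool is: for the map $j \mapsto jp+i$, the finite difference $\Delta_j^k$ in the variable $j$ corresponds to a combination of the $\binom{kp}{jp+i}$ with the stated signs, and applying $\Delta_j^k$ to any polynomial in $j$ of degree $< k$ kills it while to degree-$d$ polynomials it produces $k!$ times an integer; combined with $\binom{kp}{jp+i}$ contributing its own $p$-adic valuation, the product lands in $p^k\Z$. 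I would phrase this as: expand $(jp+i)^\ell = \sum_d c_d \binom{j}{d}$ with $c_d \in \Z$ (Newton forward differences in $j$), interchange sums, and show $\sum_j (-1)^{jp}\binom{kp}{jp+i}\binom{j}{d} \equiv 0 \bmod p^k$ for \emph{every} $d$ by a direct count of $v_p$ using Kummer/Legendre on $\binom{kp}{jp+i}\binom{j}{d}$ together with the Vandermonde-type identity $\sum_j \binom{kp}{jp+i}\binom{j}{d}(\pm1)^{jp} = \pm\binom{\text{something}}{\text{something}}$.

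\textbf{The main obstacle and contingency.} The hard part will be pinning down the exact $p$-adic valuation bookkeeping: a term-by-term Kummer estimate gives only $v_p\binom{kp}{jp+i}\ge 1$ in general, so the full power $p^k$ must come from \emph{summation} (cancellation), not from individual terms, and making that rigorous requires choosing the right polynomial basis and the right closed form for the inner sum. I would handle this by first treating $\ell < k$ (where the first sum should vanish outright as a $\Delta^{kp}$ of a low-degree polynomial restricted to a residue class, forcing a clean Vandermonde collapse) to calibrate the argument, then do induction on $\ell$: write $(jp+i)^{\ell} = (jp+i)\cdot(jp+i)^{\ell-1}$, use $j\binom{kp}{jp+i} = $ (linear combination of $\binom{kp-1}{\cdot}$ shifted), and reduce $\ell$ to $\ell-1$ at the cost of passing from $k$ to $k-1$ in one of the summands — this is exactly the recursion that makes the $p^k$ appear as a product of $k$ single factors of $p$ harvested one per induction step. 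If the induction is awkward, the fallback is to prove the identity in the group ring / via generating functions: $\sum_j (-1)^{jp}\binom{kp}{jp+i}x^{jp+i}$ is a coefficient extraction from $(1-x)^{kp}$-type expressions using $p$-th roots of unity, and the $p^k$-divisibility of the $\ell$-th "moment" then follows from the fact that $(1-\zeta x)(1-\zeta^2 x)\cdots$ groupings of $(1-x^p)^k$ expand with all coefficients in $p^k\Z$ after the relevant change of variables near $x=1$.
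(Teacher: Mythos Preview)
Your proposal does not yet contain a proof: it is a survey of possible strategies, each abandoned before the decisive step. You correctly diagnose the main difficulty --- that a term-by-term Kummer estimate on $v_p\binom{kp}{jp+i}$ yields at best one factor of $p$, so the full $p^k$ must come from cancellation --- but none of your suggested mechanisms actually produces that cancellation. Your induction on $\ell$ via $t\binom{M}{t}=M\binom{M-1}{t-1}$ only recovers the falling-factorial reduction
\[
\sum_j(-1)^{jp}\binom{kp}{jp+i}(jp+i)^{\underline{\ell}}=\frac{(kp)!}{(kp-\ell)!}\sum_j(-1)^{jp}\binom{kp-\ell}{jp+i-\ell},
\]
and the prefactor $(kp)!/(kp-\ell)!$ contributes only $O(1)$ factors of $p$ (for $\ell<p$ just $1+v_p(k)$), so you are left with the same problem for the residual sum with top index $kp-\ell$, not $(k-1)p$. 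The ``harvest one $p$ per step, drop $k$ to $k-1$'' recursion you describe does not close up.

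The paper's proof is precisely your roots-of-unity ``fallback,'' carried out in a direct way. One computes, for a primitive $p$-th root of unity $\zeta_p$ and any $M,t$, the trace
\[
\mathrm{Tr}_{\Q(\zeta_p)/\Q}\bigl(\zeta_p^{t}(1-\zeta_p)^{M}\bigr)=p\sum_{j}(-1)^{jp-t}\binom{M}{jp-t},
\]
which is exactly the orthogonality-of-characters extraction of a single residue class from the binomial expansion. Since $(1-\zeta_p)^{p-1}=p\,y$ with $y\in\Z[\zeta_p]$, one has $(1-\zeta_p)^{kp}=p^{k}(1-\zeta_p)^{k}y^{k}$, and taking traces immediately gives $p^{k}\mid\sum_j(-1)^{jp}\binom{kp}{jp}$, the $\ell=0$ case of the first assertion. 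The falling-factorial identity above (which you also found) then reduces general $\ell$ to a trace of $\zeta_p^{\ell}(1-\zeta_p)^{kp-\ell}=p^{k}\zeta_p^{\ell}(1-\zeta_p)^{k-\ell}y^{k}$ for $\ell<k$, and induction on $\ell$ passes from falling factorials to powers. The second assertion is handled identically with $t=-i$. The missing idea in your write-up is precisely this trace formula, which converts the residue-class sum into an algebraic integer visibly divisible by $p^{k}$; without it, the cancellation you are looking for has no concrete source.
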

\begin{proof}
We fix a primitive root of unity $\zeta_p$. By a mod $p$ reduction of the binomial coefficients we readily observe that
\begin{equation}\label{pp}
(1-\zeta_p)^{p-1}\ =\ py\qquad \text{for some}\ y\in \Z[\zeta_p]. 
\end{equation}
For any $a\in \Q(\zeta_p)$, we let $\mathrm{Tr}(a)$ denote the trace of $a$ with respect to the algebraic field extension $\Q(\zeta_p)/\Q$.

Observe for any $M\in \N$ and $t\in \Z$ that
\begin{align*}
\mathrm{Tr}\big(\zeta_p^{t}(1 - \zeta_p)^{M}\big)\ &=\ \sum_{i = 1}^{p - 1} \zeta_p^{it}(1 - \zeta_p^i)^{M} \ =\ \sum_{i = 0}^{p - 1} \zeta_p^{it}(1 - \zeta_p^i)^{M}\\ 
&=\ \sum_{i = 0}^{p - 1}\sum_{\mu = 0}^{M}(-1)^{\mu}\binom{M}{\mu}\zeta_p^{(\mu+t)i} \\
&= \sum_{\mu = 0}^{M}(-1)^\mu\binom{M}{\mu}\sum_{i = 0}^{p - 1}(\zeta_p^{\mu+t})^i.
\end{align*}
For all $\mu \in \N$, we have that
$$\sum_{i = 0}^{p - 1}(\zeta_p^{\mu+t})^i = 0\ \text{ if }\ p \nmid \mu+t \qquad \text{ and }\qquad  \sum_{i = 0}^{p - 1}(\zeta_p^{\mu+t})^i = p\ \text{ if }\ p \mid \mu+t,$$
which gives, for $t > -p,$ that
\begin{equation}\label{Tr}
\mathrm{Tr}\big(\zeta_p^{t}(1 - \zeta_p)^{M}\big)\ =\ p\sum_{j = \max (0, \lceil t/p\rceil)}^{\lfloor (M+t)/p\rfloor}(-1)^{j p-t}\binom{M}{j p-t}.
\end{equation}
Note this implies, by the linearity of the trace operator, that $p\mid \mathrm{Tr}\big((1 - \zeta_p)^{M} y'\big)$ for any $M\in \N^{\ge 1}$ and $y'\in \Z[\zeta_p]$.

We now work out the first assertion. Applying (\ref{Tr}) for $M=kp$ and $t=0$ gives
$$\mathrm{Tr}\big((1 - \zeta_p)^{kp}\big)\ =\ p\sum_{j = 0}^{k}(-1)^{jp}\binom{kp}{jp}.$$
Then the identity (\ref{pp}) implies that 
$$p^k\cdot \mathrm{Tr}\big((1 - \zeta_p)^{k}y^k\big)\ =\ p\sum_{j = 0}^{k}(-1)^{jp}\binom{kp}{jp}.$$
Then by the remark just after (\ref{Tr}),
$$p\mid \mathrm{Tr}\big((1 - \zeta_p)^{k}y^k\big)\quad \text{ and hence}\quad  \sum_{j = 0}^{k}(-1)^{jp}\binom{kp}{jp}\ \equiv\ 0 \mod p^k.$$
Hence we have the first assertion for the case $\ell=0$. 

Note that this claim is immediate for $\ell \ge k$. For any $1\le \ell <k$, consider
$$\sum_{j = 0}^{k}(-1)^{jp}\binom{kp}{jp}(jp)(jp - 1)\ldots(jp - \ell+ 1)= \frac{(kp)!}{(kp - \ell)!}\sum_{j = \lceil \ell/p\rceil}^{k}(-1)^{jp}\binom{kp - \ell}{jp - \ell}.$$
By (\ref{Tr}) for $M=kp-\ell$ and $t=\ell$, and (\ref{pp}) we see that
$$p\sum_{j = \lceil \ell/p\rceil}^{k}(-1)^{jp-\ell}\binom{kp - \ell}{jp - \ell}\ = \ \mathrm{Tr}\big(\zeta_p^{\ell}(1 - \zeta_p)^{kp-\ell}\big)\ =\ p^k\cdot \mathrm{Tr}\big(\zeta_p^{\ell}(1 - \zeta_p)^{k-\ell}y^k\big). $$
Then as before we use the remark just after (\ref{Tr})  to conclude
$$ \sum_{j = 0}^{k}(-1)^{jp}\binom{kp}{jp}(jp)(jp - 1)\ldots(jp - \ell+ 1)\ \equiv\ 0 \mod p^k. $$
From the above equation, the first assertion follows for $\ell = 1$. For $\ell >1$, note we have integers $b_1, \ldots, b_{\ell - 1}$ such that 
$$jp(jp - 1)\ldots(jp - \ell +1)\ =\ (jp)^{\ell} + b_1(jp)^{\ell - 1} + \ldots + b_{l - 1}(jp),$$ 
and hence a simple induction on $\ell$ completes the proof of the first assertion.

We follow a similar process for the proof of the other claim as well, and consider first the case of $\ell=0$. Fix $i\in \{1,\ldots, p-1\}$. Then applying (\ref{Tr}) for $M=kp$ and $t=-i$, we get that
$$\mathrm{Tr}\big(\zeta_p^{-i}(1 - \zeta_p)^{kp}\big)\ =\ p\sum_{j = 0}^{k - 1}(-1)^{i + jp}\binom{kp}{ jp +i}.$$
Then the same arguments as before imply the desired result. We leave the general $\ell\ge  1$ case of the second assertion as an exercise to the reader.
\end{proof}

\noindent
This lemma will be used to obtain a strong divisibility implication of concordance. Recall the following definition from the introduction.

\begin{definition} A function $f:\R\to \R$ is $k$-\textit{concordant} if for each tuple $m_0,\dots,m_{k}$  of sufficiently large positive integers, there is a polynomial $P\in\Z[X]$ such that $P(m_i)=f(m_i)$ for all $i=0,\ldots, k$. Note we do not require the $m_0,\ldots, m_k$ in the statement above to be distinct.
\end{definition}

\begin{remark}\label{remcon} Clearly, $f:\R\to \R$ is  $0$-concordant if and only if $f(a)\in \Z$ for all sufficiently large $a$. Note that $1$-concordance is equivalent to the condition that $f(m_1)\equiv f(m_2)\bmod(m_1-m_2)$ for all sufficiently large  $m_1, m_2$. For all $k$, it follows directly from definition that $k+1$-concordance implies $k$-concordance, and hence $k$-concordant functions take integer values for all sufficiently positive integer inputs. 
\end{remark}

\begin{remark}\label{PeZa}
In \cite{PeZ}, Perelli and Zannier consider functions satisfying a weaker form of $1$-concordance. For an integer-valued entire function $f:\C\to \C$, they show that if  $f(a+p) \equiv f(a) \bmod p$ for all positive integers $a$ and large enough primes $p$, then  $|f(z)|=O(C^{|z|})$, for some $C<e+1$, implies that $f$ must be a polynomial. We remark that the conclusions of the following lemma and Theorem~\ref{concord} for the case of $k=1$ hold even when the function $f:\R\to \R$ satisfies only this  weaker form of 1-concordance. The details are left to the reader. 
\end{remark}

\noindent
Recall $\gamma_k=\sum_{j=1}^k 1/j$. The following corresponds to \cite[Theorem 3.5, Lemma 5.1]{Pila}.

\begin{lemma}\label{concordant}
Let $f:\R \to \R$ be $k$-concordant, with $k\ge 1$. Then there are $c, B$ such that for all $a\ge B$ and all $n$ we have that  $$|\Delta^nf(a)|\ \ge\ e^{\gamma_k n-c n/\log(n)}\qquad \text{ or }\qquad \Delta^nf(a)\ =\ 0.$$
\end{lemma}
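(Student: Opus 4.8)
\noindent
The plan is to extract from $k$-concordance a strong divisibility property of the iterated differences $\Delta^n f(a)$, and then convert it into the stated lower bound via the prime number theorem; throughout, $v_p$ denotes the $p$-adic valuation. The key step is to prove that there is $B$ such that for every $a\ge B$, every prime $p$, and every $m$ with $1\le m\le k$, one has $p^m\mid\Delta^{mp}f(a)$. (Since $k$-concordance implies $m$-concordance for all $m\le k$ by Remark~\ref{remcon}, one threshold $B$ works for all these $m$: the one witnessing $k$-concordance suffices, padding tuples with repeated entries when fewer than $k+1$ points are needed.) To prove the claim, fix $a\ge B$, a prime $p$, and $m\in\{1,\dots,k\}$, expand $\Delta^{mp}f(a)=\sum_{j=0}^{mp}(-1)^{mp-j}\binom{mp}{j}f(a+j)$ using $(\ref{eq: Delta^n})$, and group the indices $j$ by their residue $i:=j\bmod p$. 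In the class $i=0$ the relevant arguments are $a,a+p,\dots,a+mp$ ($m+1$ of them), and in a class $i\in\{1,\dots,p-1\}$ they are $a+i,a+i+p,\dots,a+i+(m-1)p$ ($m$ of them, so here one pads the tuple with a repeated entry). In each case $m$-concordance supplies $P\in\Z[X]$ agreeing with $f$ on the corresponding arithmetic progression; writing $P(a+Y)=\sum_t c_t\,Y^t$ with integers $c_t$, the contribution of the class $i$ is then $\pm\sum_t c_t\sum_\ell(-1)^{\ell p}\binom{mp}{\ell p+i}(\ell p+i)^t$, where $\ell$ runs over $\{0,\dots,m\}$ if $i=0$ and over $\{0,\dots,m-1\}$ if $i\neq0$. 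By Lemma~\ref{cmain}, with $k$ there taken to be $m$, every inner sum is $\equiv0\pmod{p^m}$, so summing over the residues gives $p^m\mid\Delta^{mp}f(a)$.

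Next, for $mp\le n$ the operator identity $\Delta^n=\Delta^{\,n-mp}\circ\Delta^{mp}$ exhibits $\Delta^n f(a)$ as a $\Z$-linear combination of the integers $\Delta^{mp}f(a+s)$, $0\le s\le n-mp$, each divisible by $p^m$ by the previous step, whence $p^m\mid\Delta^n f(a)$ for all $a\ge B$. Choosing $m=\min(k,\lfloor n/p\rfloor)$ yields $v_p\bigl(\Delta^n f(a)\bigr)\ge\min(k,\lfloor n/p\rfloor)$ for all $a\ge B$, all $n$, and all primes $p$. Now suppose $\Delta^n f(a)\neq0$; then $|\Delta^n f(a)|\ge\prod_{p\le n}p^{\min(k,\lfloor n/p\rfloor)}$, and interchanging the order of summation gives
\[
\log|\Delta^n f(a)|\ \ge\ \sum_{p\le n}\min\!\bigl(k,\lfloor n/p\rfloor\bigr)\log p\ =\ \sum_{j=1}^{k}\theta(n/j),
\]
where $\theta$ is the Chebyshev function. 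By the prime number theorem $\theta(x)=x+O(x/\log x)$, so the right-hand side is $n\sum_{j=1}^{k}1/j+O(n/\log n)=\gamma_k n+O(n/\log n)$; hence $|\Delta^n f(a)|\ge e^{\gamma_k n-cn/\log n}$ for a suitable $c=c(k)$ and all large $n$. The finitely many remaining small values of $n$ are immediate, since $\Delta^n f(a)\in\Z$.

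The main obstacle is the first paragraph: one has to decompose $\Delta^{mp}f(a)$ into residue classes modulo $p$ in precisely the shape handled by Lemma~\ref{cmain}, treat the classes $i\neq0$ (which carry one fewer point, so concordance must be invoked on a padded tuple), and expand the interpolating polynomials carefully enough that each resulting sum is literally one of the two congruences that lemma provides. Once the valuation bound is available, the prime-counting step is routine; the point worth noting is the identity $\sum_p\min(k,\lfloor n/p\rfloor)\log p=\sum_{j=1}^{k}\theta(n/j)$, which falls out on switching the order of summation and is exactly what makes the constant $\gamma_k$ appear.
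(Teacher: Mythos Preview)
Your proof is correct and follows essentially the same route as the paper's. Both arguments reduce to showing $p^m\mid\Delta^{mp}f(a)$ by splitting the sum $(\ref{eq: Delta^n})$ into residue classes modulo $p$ and invoking Lemma~\ref{cmain}, then push this up to $\Delta^n f(a)$ and apply the prime number theorem to $\sum_{j=1}^k\theta(n/j)$; your write-up is slightly more explicit in spelling out the use of $m$-concordance for each $m\le k$ and the operator identity $\Delta^n=\Delta^{n-mp}\circ\Delta^{mp}$, but the underlying argument is the same.
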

\begin{proof}
Let a $k$-concordant $f:\R \to \R$ be given. Fix a $B$ such that for all $m_0,\ldots, m_k \in \N^{\ge B}$, there is a polynomial $P\in \Z[X]$  with $P(m_i)=f(m_i)$ for all $i=0,\ldots,k$. We claim that for any $a\ge B$ and prime $p$ that
\begin{equation}\label{claim}
\Delta^{kp}f(a) \equiv 0 \mod p^k.
\end{equation}
To show this, we fix a $p$ and use (\ref{eq: Delta^n}) to get that
$$\Delta^{kp}f(a) = (-1)^{kp}\sum_{i = 0}^{kp}(-1)^{i}\binom{kp}{i}f(a + i).$$
This can be rearranged as follows
$$(-1)^{kp}\Delta^{kp}f(a)\ =\ \sum_{j = 0}^{k}(-1)^{jp}\binom{kp}{jp}f(a + jp)\ +\ \sum_{i = 1}^{p - 1}\sum_{j = 0}^{k - 1}(-1)^{jp+i}\binom{kp}{jp+i}f(a + jp+i).$$
Consider the first sum on the right hand side. Since $a\ge B$, the $k$-concordance of $f$ gives a polynomial $P\in\Z[X]$ such that $P(jp)=f(a+jp)$ for all $j=0, \ldots, k$.
Now the first assertion of Lemma~\ref{cmain} implies that
$$\sum_{j = 0}^{k}(-1)^{jp}\binom{kp}{jp}f(a + jp)\ =\ \sum_{j = 0}^{k}(-1)^{jp}\binom{kp}{jp}P(jp)\  \equiv\ 0 \mod p^k.$$
A similar argument, now involving the second assertion of Lemma~\ref{cmain}, gives for each $i\in \{1,\ldots, p-1\}$ that
$$\sum_{j = 0}^{k - 1}(-1)^{jp+i}\binom{kp}{jp+i}f(a + jp+i)\  \equiv\ 0 \mod p^k,$$
and we have proved the claim set forth in (\ref{claim}).

So we have for any $p\le \frac{n}{k}$  that
$\Delta^nf(a) \equiv 0 \mod p^k$, and hence we get for all $\ell=1,2,\ldots,k,$ $$\prod\limits_{ p \le \frac{n}{\ell}} p^{\ell}\ \mid \ \Delta^nf(a),$$
with an empty product being $1$ by convention. In other words,
$$\prod\limits_{\ell \le k}\prod\limits_{p \le \frac{n}{\ell}} p\ \mid \ \Delta^nf(a).$$
The prime number theorem gives a constant $c$ such that
$$\sum_{\ell \le k}\sum_{p\le \frac{n}{l}}\log (p)\ \ge\ \gamma_k n - c n/\log(n)  $$
for all $n$.  Thus we get for all $n$ that either $$|\Delta^nf(a)|\ \ge\ e^{\gamma_kn-cn/\log(n)}\qquad \text{ or }\qquad \Delta^nf(a)=0,$$
and the proof is complete.
\end{proof}

\begin{lemma}\label{bounds1}
Let analytic $g:\H(-2)\to \C$ and $k\ge 1$ be given. Suppose there are $\epsilon<1$, and $C$ such that
$$|g(z)|\le |C^{z+\epsilon|z|}|$$
for all $z\in \H(-2)$. If $C^{1+\epsilon}<e^{\gamma_k}+1$, then there is $C_\star<e^{\gamma_k}$ such that for all sufficiently large $n$ we have that
$$|\Delta^{n} g(0)|\ \le\  C_{\star}^n.$$ 
\end{lemma}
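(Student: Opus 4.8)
The plan is to represent $\Delta^{n}g(0)$ as a Cauchy integral over a contour of the $\mathscr{C}_{n,s}$--type from Lemma~\ref{integral} and to estimate the two pieces, the essential point being to choose the radius of the circular arc in terms of $D\coloneq C^{1+\epsilon}$ so as to extract the sharp exponential rate. Put $s=1$; set $t\coloneq D/(D-1)\in(1,2)$ if $D>2$ and $t\coloneq 2$ otherwise, and let $R_n\coloneq tn$. For all large $n$ the closed contour $\Gamma_n$ consisting of the major arc $\Omega_n$ of $|z|=R_n$ with $\mathrm{Re}(z)\ge -1$, followed by the segment $T_n$ on $\mathrm{Re}(z)=-1$, lies in $\H(-2)$ and winds once around $0,1,\dots,n$. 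Since $\mathrm{Res}_{z=k}\tfrac{n!}{z(z-1)\cdots(z-n)}=(-1)^{n-k}\binom{n}{k}$, the residue theorem and (\ref{eq: Delta^n}) give
$$\Delta^{n}g(0)=\frac{1}{2\pi i}\oint_{\Gamma_n}\frac{n!\,g(z)}{z(z-1)\cdots(z-n)}\,dz,\qquad\text{so}\qquad |\Delta^{n}g(0)|\le\frac{1}{2\pi}\bigl(\tilde I_n+\tilde J_n\bigr),$$
where $\tilde I_n$ and $\tilde J_n$ denote the integrals of $n!\,|g(z)|/(|z|\cdots|z-n|)$ over $\Omega_n$ and $T_n$ respectively.

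For the arc I would run the estimate underlying (\ref{2}): parameterising $z=R_ne^{i\theta}$ one gets $\prod_{k=0}^{n}|z-k|\ge e^{cn\theta^{2}}\prod_{k=0}^{n}(R_n-k)$ for some $c=c(t)>0$, while $|g(z)|\le C^{\mathrm{Re}(z)+\epsilon R_n}$ is $\le D^{R_n}$ if $C\ge 1$ and $\le C^{-1}$ if $C<1$. A Gaussian integration in $\theta$ and Stirling's formula then yield $\tilde I_n\le \mathrm{poly}(n)\cdot\rho_{\mathrm{arc}}^{\,n}$, where $\rho_{\mathrm{arc}}=(t-1)^{t-1}D^{t}/t^{t}$ for $C\ge 1$ (and $(t-1)^{t-1}/t^{t}$ for $C<1$). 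A direct computation shows $\rho_{\mathrm{arc}}=D-1$ when $D>2$ and $t=D/(D-1)$, and $\rho_{\mathrm{arc}}\le 1$ when $D\le 2$ and $t=2$; in both cases $\rho_{\mathrm{arc}}<e^{\gamma_k}$ by hypothesis. This is exactly where the ``$+1$'' in $C^{1+\epsilon}<e^{\gamma_{k}}+1$ is consumed.

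For the segment, writing $z=-1+iy$ we have $|z-k|=\sqrt{(1+k)^{2}+y^{2}}\ge\sqrt{k^{2}+y^{2}}$ and $|g(z)|\le C^{-1+\epsilon\sqrt{1+y^{2}}}\le C^{-1}C^{\epsilon}C^{\epsilon|y|}$ when $C\ge1$ (the case $C<1$ being easier). Bounding $\sum_{k=0}^{n}\log(k^{2}+y^{2})$ below by $\int_{0}^{n}\log(x^{2}+y^{2})\,dx$, applying Stirling's formula, and substituting $y=vn$ produces $\tilde J_n\le\mathrm{poly}(n)\cdot\exp\!\bigl(n\max_{0\le v\le t}\phi(v)\bigr)$, where $\phi(v)=v\epsilon\log C-v\arctan(1/v)-\tfrac12\log(1+v^{2})$. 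As $\phi'(v)=\epsilon\log C-\arctan(1/v)$ is monotone, $\phi$ has no interior maximum, so $\max_{[0,t]}\phi=\max\{0,\phi(t)\}$. When $D\le 2$ one has $\epsilon\log C=\tfrac{\epsilon}{1+\epsilon}\log D<\tfrac12\log 2<\arctan(1/2)\le\arctan(1/v)$ for $v\le 2$, so $\phi$ decreases on $[0,2]$ and the maximum is $\phi(0)=0$. When $D>2$ the hypothesis $\epsilon<1$ gives $\epsilon\log C=\tfrac{\epsilon}{1+\epsilon}\log D<\tfrac12\log D$; substituting this into $\phi$ and rearranging (using $D=t/(t-1)$ and that $(\tfrac{t}{2}-1)\log\tfrac{t}{t-1}\le 0$ on $(1,2)$) yields $\phi(t)<-\log(t-1)=\log(D-1)$. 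In all cases $\max_{[0,t]}\phi<\gamma_k$.

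Putting the pieces together, $|\Delta^{n}g(0)|\le\mathrm{poly}(n)\cdot\rho^{\,n}$ with $\rho\coloneq\max\{\rho_{\mathrm{arc}},\exp(\max_{[0,t]}\phi)\}<e^{\gamma_{k}}$; fixing any $C_\star$ with $\rho<C_\star<e^{\gamma_{k}}$ gives $\mathrm{poly}(n)\,\rho^{\,n}\le C_\star^{\,n}$ for all large $n$, as required. I expect the main obstacle to be the segment estimate. In Lemma~\ref{integral} the segment sits far to the left on the scale of $n$ and is harmless, but here $R_n$ is comparable to $n$, so the segment reaches $|y|\approx R_n$ where $|g(z)|$ can be as large as $C^{\epsilon R_n}$, and one must verify that this is still dominated by the denominator $|z|\cdots|z-n|$ --- that is, that $\max_{[0,t]}\phi<\gamma_{k}$. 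This inequality, rather than the cleaner arc bound, is the real content of the lemma.
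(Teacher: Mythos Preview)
Your contour-integral strategy is the same as the paper's and your arc computation is sound; the proof works. But your implementation diverges from the paper in an instructive way, and your closing comment about which piece is the ``real content'' is exactly backwards.

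The paper fixes the radius at $R_n=(1+1/e^{\gamma_k})n$, independently of $D=C^{1+\epsilon}$. With this choice the arc bound comes out to $C_1=e^{\gamma_k}\bigl(D/(e^{\gamma_k}+1)\bigr)^{1+1/e^{\gamma_k}}<e^{\gamma_k}$ via the crude estimate $|z-k|\ge R_n-k$ and Stirling, while the segment is disposed of in two lines: on $\mathrm{Re}(z)=-1$ one has $|g(z)|\le C^{\epsilon|z|}\le (C^{\epsilon})^{R_n}$ and $\prod_{k=0}^{n}|z-k|\ge (n+1)!$, so the segment contributes at most a constant times $(C^{\epsilon})^{(1+1/e^{\gamma_k})n}$; then $\epsilon<1$ and $1+1/e^{\gamma_k}<3/2$ give $(C^{\epsilon})^{1+1/e^{\gamma_k}}<C^{3\epsilon/2}<(e^{\gamma_k}+1)^{3/4}\le e^{\gamma_k}$. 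No $\phi$-function, no Laplace-type analysis.

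You instead take $t=D/(D-1)$, which is optimal for the arc and yields the cleaner rate $D-1$, but at the cost of a larger circle when $D$ is near $2$ and hence a longer vertical segment on which $|g|$ can be as big as $C^{\epsilon t n}$. That is what forces your $\phi$-analysis. Your convexity argument for $\max_{[0,t]}\phi\in\{0,\phi(t)\}$ is fine, and your claim $\phi(t)<\log(D-1)$ is true, but the ``rearranging'' step is underspecified: after using $(t/2-1)\log\tfrac{t}{t-1}\le 0$ you still need the separate inequality $\log t - t\arctan(1/t)-\tfrac12\log(1+t^{2})<0$, which holds for all $t>0$ (its derivative is $1/t-\arctan(1/t)>0$ and its limit as $t\to\infty$ is $-1$), but which you do not state. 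Also, the Gaussian factor from (\ref{2}) is unnecessary here: unlike Lemma~\ref{integral} there is no $\bigl|\tfrac{z-2n}{n}\bigr|^{\mu}$ weight, so the crude $\prod|z-k|\ge\prod(R_n-k)$ already gives $\tilde I_n\le 2\pi R_n\cdot n!\,D^{R_n}/\prod(R_n-k)$.

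In short: both routes succeed. Yours extracts the sharp exponential rate $D-1$ on both pieces, which is aesthetically pleasing; the paper's gives up a little on the arc in exchange for trivializing the segment, which is shorter.
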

\begin{proof}
Let an analytic function $g:\H(-2)\to \C$, and positive integer $k$ be given. Let $\epsilon, C$ be such that $|g(z)|\le |C^{z+\epsilon|z|}|$ for all $z\in \H(-2)$, and also $C^{1+\epsilon}<e^{\gamma_k}+1$. If $C\le 1$, then $|\Delta^{n} g(0)|\le 2^n$ and we are done. Hence we assume that $C>1$.

Fix an $n$, and let $\mathscr{C}_{n}$ be the contour formed by traversing once counter-clockwise the boundary of the region inside $\{z\in\C:\ \mathrm{Re}(z)\ge -1\}$ that is bounded by the circle $|z|=(1+1/e^{\gamma_k})n$  and the vertical line $\mathrm{Re}(z)=-1$. We denote by $T_n$ the part of $\mathscr{C}_n$ that is contained in the vertical line, and let $\Omega_n$ denote the rest of the contour, so $\mathscr{C}_n=\Omega_n\cup T_n$. Then Cauchy's integral formula and (\ref{eq: Delta^n}) gives that $$\Delta^ng(0) = \frac{1}{2\pi i} \int_{\mathscr{C}_{n}}\frac{n!g(z)}{z(z-1)\ldots(z-n)}dz.$$
For all $z\in \Omega_n$,  $|g(z)|\le (C^{1+\epsilon})^{(1+1/e^{\gamma_k})n}$, and $|z-k|\ge |z|-k= (1+1/e^{\gamma_k})n-k$ for all $k=0,1,\ldots,n$. A straightforward application of Stirling's approximation gives $c_1$ such that 
$$\frac{n!}{(1+1/e^{\gamma_k})n\big((1+1/e^{\gamma_k})n-1\big)\ldots\big((1+1/e^{\gamma_k})n-n\big)}\ < \ \frac{c_1\cdot e^{\gamma_k n}}{(e^{\gamma_k}+1)^{(1+1/e^{\gamma_k})n}}.$$
Then changing $c_1$ as required, we get for $C_1\coloneq e^{\gamma_k}\cdot  \big(C^{1+\epsilon}/(e^{\gamma_k}+1)\big)^{1+1/e^{\gamma_k}} $ that
$$\int_{\Omega_{n}}\frac{n!|g(z)|}{|z|\cdot|z-1|\cdots|z-n|}dz \ <\ c_1 \cdot n C_1^n;$$
note that $C_1<e^{\gamma_k}$.
We now turn our attention to the integral over $T_n$. For all $z\in T_n$, we have that $|g(z)|\le  (C^{\epsilon})^{(1+1/e^{\gamma_k})n}$, and $|z-k|\ge k+1$ for all $k=0,1,\ldots,n$. 

Using $ C^{1+\epsilon}<e^{\gamma_k}+1$ and $\epsilon<1$ it follows that $C^{2\epsilon}< e^{\gamma_k}+1$. Then since $x^4\ge (x+1)^3$ for all $x\ge e$, we get that $C_2\coloneq(C^{\epsilon})^{(1+1/e^{\gamma_k})}<C^{3\epsilon/2}<e^{\gamma_k}$. So there is $c_2$ such that 
$$ \int_{T_{n}} \frac{n!|g(z)|}{|z|\cdot|z-1|\cdots|z-n|}\ \le  \int_{T_{n}} n^{-1} \cdot C_2^n\ \le\ c_2\cdot C_2^n. $$
Hence it is immediate that the desired conclusion follows for any choice of $C_{\star}$ satisfying $\max(C_1, C_2)<C_{\star}<e^{\gamma_k}$.
\end{proof}

\begin{proof}[Proof of Theorem \ref{concord}]
Let $k$-concordant $\Ranexp$-definable $f:\R\to \R $ be given, and suppose there is $C< e^{\gamma_k}+1$ such that $|f(x)|\le C^x$ for all sufficiently large $x$. The case is $k=0$ follows  by Theorem~\ref{main} (or \cite[Theorem 5.1]{Wilkie}). So we assume $k\ge 1$ and apply Lemma~\ref{concordant} to obtain $c,B_0$ such that
\begin{equation}\label{8}
|\Delta^nf(a)|\ \ge\ e^{\gamma_k n-c n/\log(n)}\qquad \text{ or }\qquad \Delta^nf(a)\ =\ 0
\end{equation}
for all $a\ge B_0$ and all $n$. Fix any $\epsilon<1$ such that $C^{1+\epsilon}<e^{\gamma_k}+1$. Proposition~\ref{3.1} to gives $B\ge B_0$ and an analytic function $g:\H(B-2)\to \C$ with \begin{enumerate}
\item[$\rm(i)$] $|f(x)-g(x)|< 1$ for all $x \ge B$,
\item[$\rm(ii)$]   $|g(z)|\le |C^{z+\epsilon|z|}|$ for all $z\in \H(B-2)$.
\end{enumerate}
This implies the easy estimate $|\Delta^n\big(f(B)-g(B)\big)|\le 2^n$ for  all $n$.  Then the linearity of $\Delta$, and Lemma~\ref{bounds1} applied to an appropriate translate of $g$, gives $C_{\star}<e^{\gamma_k}$  such that for all large sufficiently large $n$ we have that
$$\Delta^n f(B)\ \le \ C_{\star}^n.$$
This together with (\ref{8}) implies that $\Delta^n f(B)=0$ for all sufficiently large $n$, and finally the theorem follows by Fact~\ref{Deltao}.
\end{proof}

\bibliographystyle{amsplain}

\bibliography{BMRW}

\end{document}